\def\noprint#1{}
\newtheorem{assumption}{Assumption}
\newcommand{\eps}{\epsilon}
\newcommand{\ddd}{\delta}
\newcommand{\R}{\mathbb{R}}
\def\tto{\;{\lower 1pt \hbox{$\rightarrow$}}\kern -10pt
           \hbox{\raise 2.8pt \hbox{$\rightarrow$}}\;}
\newcommand{\TheTitle}{Behavior of Accelerated Gradient Methods Near
  Critical Points of Nonconvex Functions}
\newcommand{\RunningTitle}{Accelerated Gradient Methods in Nonconvex
  Optimization} \newcommand{\TheAuthors}{Michael O'Neill and Stephen
  J. Wright}
\headers{\RunningTitle}{\TheAuthors}
\title{{\TheTitle}\thanks{Version of \today.  \funding{This work was
      supported by NSF Awards IIS-1447449, 1628384, 1634597, and
      1740707; AFOSR Award FA9550-13-1-0138; and Subcontract 3F-30222
      from Argonne National Laboratory. Part of this work was done
      while the second author was visiting the Simons Institute for
      the Theory of Computing, and partially supported by the
      DIMACS/Simons Collaboration on Bridging Continuous and Discrete
      Optimization through NSF Award CCF-1740425.}}}
\author{
Michael O'Neill\thanks{Computer Sciences Department, University of Wisconsin, Madison, WI 53706 (\email{moneill@cs.wisc.edu}).}
\and
Stephen J. Wright\thanks{Computer Sciences Department, University of Wisconsin, Madison, WI 53706 (\email{swright@cs.wisc.edu}).}
}
\DeclareMathOperator{\diag}{diag}
\begin{document}

\maketitle

\begin{abstract}
We examine the behavior of accelerated gradient methods in smooth
nonconvex unconstrained optimization, focusing in particular on their
behavior near strict saddle points. Accelerated methods are iterative
methods that typically step along a direction that is a linear
combination of the previous step and the gradient of the function
evaluated at a point at or near the current iterate. (The previous
step encodes gradient information from earlier stages in the iterative
process.)  We show by means of the stable manifold theorem that the
heavy-ball method is unlikely to converge to strict saddle
points, which are points at which the gradient of the objective is
zero but the Hessian has at least one negative eigenvalue.  We then
examine the behavior of the heavy-ball method and other
accelerated gradient methods
in the vicinity of a strict saddle point of a nonconvex quadratic
function, showing that both methods can diverge from this point more
rapidly than the steepest-descent method.
\end{abstract}

\begin{keywords}
Accelerated Gradient Methods, Nonconvex Optimization
\end{keywords}

\begin{AMS}
90C26
\end{AMS}

\section{Introduction} \label{sec:intro}

We consider methods for the smooth unconstrained optimization problem
\begin{equation} \label{eq:f}
\min_{x \in \R^n} \, f(x),
\end{equation}
where $f: \R^n \to \R$ is a twice continuously differentiable function.
We say that $x^*$ is a {\em critical point} of \eqref{eq:f} if $\nabla f(x^*)=0$.
Critical points that are not local minimizers are of little interest in the context
of the optimization problem \eqref{eq:f}, so a desirable property of
any algorithm for solving \eqref{eq:f} is that it not be attracted to
such a point. Specifically, we focus on functions with {\em strict saddle
points}, that is, functions where the Hessian at each saddle point has
at least one negative eigenvalue.

Our particular interest here is in methods that use {\em gradients}
and {\em momentum} to construct steps. In many such methods, each step
is a linear combination of two components: the gradient $\nabla f$
evaluated at a point at or near the latest iterate, and a momentum term,
which is the step between the current iterate and the previous
iterate.  There are rich convergence theories for these methods in the
case in which $f$ is convex or strongly convex, along with extensive
numerical experience in some important applications.  However,
although these methods are applied frequently to nonconvex functions,
little is known from a mathematical viewpoint about their behavior in
such settings. Early results showed that a certain modified
accelerated gradient method achieves the same order of convergence on
a nonconvex problem as gradient descent \cite{ghadimi2016accelerated}
\cite{li2015accelerated} --- not a faster rate, as in the convex
setting. 

The heavy-ball method was studied in the nonconvex setting in
\cite{zavriev1993heavy}.  From an argument based on a Lyapunov
function, this work shows that heavy-ball converges to some set of
stationary points when short step sizes are used. Their result also
implies that with these shorter stepsizes, heavy-ball converges to
these stationary points with a sublinear rate, just as gradient
descent does in the nonconvex case.  Another work studied the
continuous time heavy-ball method \cite{attouch2000heavy}.  For Morse
functions (functions where all critical points have a non-singular
Hessian matrix), this paper shows that the set of initial conditions
from which heavy-ball converges to a local minimizer is an open dense
subset of $\mathbb{R}^n \times \mathbb{R}^n$. We present a similar
result for a larger class of functions, using techniques like those of
\cite{LeeSJR16a}, where the authors show that gradient descent,
started from a random initial point, converges to a strict saddle
point with probability zero.
We show that the discrete heavy-ball method essentially shares this
property. We also study whether momentum methods can ``escape'' strict
saddle points more rapidly than gradient descent. Experience with
nonconvex quadratics indicate that, when started close to the
(measure-zero) set of points from which convergence to the saddle
point occurs, momentum methods do indeed escape more quickly.

After submission of our paper, \cite{jin2017accelerated} described a
method that combines accelerated gradient, perturbation at points with
small gradients and explicit negative curvature detection to attain a
method with worst-case complexity guarantees.



\paragraph{Notation} 

For compactness, we sometimes use the notation $(y,z)$ to denote the
vector $\left[ \begin{matrix} y \\ z \end{matrix} \right]$, for $y \in
\R^n$ and $z \in \R^n$.

\section{Heavy-Ball is Unlikely to Converge to Strict Saddle Points} 
\label{sec:sm}

We show in this section that the heavy-ball method is not attracted to
strict saddle points, unless initialized in a very particular way,
that cannot occur if the starting point is chosen at random and the
algorithm is modified slightly. Following \cite{LeeSJR16a}, our proof
is based on the stable manifold theorem.

We make the following assumption throughout this section.
\begin{assumption} \label{ass:1}
The function $f: \R^n \to \R$ is $r+1$ times continuously
differentiable, for some integer $r \ge 1$, and $\nabla f$ has
Lipschitz constant $L$.
\end{assumption}
Under this assumption, the eigenvalues of the Hessian $\nabla^2
f(x^*)$ are bounded in magnitude by $L$.

The heavy-ball method is a prototypical momentum method (see
\cite{Pol87}), which proceeds as follows from a starting point $x^0$:
\begin{equation} \label{eq:hb}
x^{k+1} := x^k - \alpha \nabla f(x^k) + \beta (x^k-x^{k-1}), \quad
\makebox{with $x^{-1}=x^0$}.
\end{equation}
Following \cite{Pol87}, we can write \eqref{eq:hb} as follows:
\begin{equation} \label{eq:hb2}
\left[ \begin{matrix} x^{k+1} \\ x^k \end{matrix} \right] =
\left[ \begin{matrix} x^k - \alpha \nabla f(x^k) + \beta (x^k-x^{k-1}) \\
x^k \end{matrix} \right].
\end{equation}
Convergence for this method is known for the special case in which $f$
is a strongly convex quadratic. Denote by $m$ the positive lower bound
on the eigenvalues of the Hessian of this quadratic, and recall that
$L$ is the upper bound. For the settings
\begin{equation} \label{eq:polyak.hb}
\alpha = \frac{4}{(\sqrt{L}+\sqrt{m})^2}, \quad
\beta = \frac{\sqrt{L}-\sqrt{m}}{\sqrt{L}+\sqrt{m}}
\end{equation}
a rigorous version of the eigenvalue-based argument in
\cite[Section~3.2]{Pol87} can be applied to show R-linear convergence
with rate constant $\sqrt{\beta}$, which is approximately
$1-\sqrt{m/L}$ when the ratio $L/m$ is large. This suggests a
complexity of $O(\sqrt{L/m} \log \epsilon)$ iterations to reduce the
error $\| x^k - x^* \|$ by a factor of $\epsilon$ (where $x^*$ is the
unique solution). Such rates are typical of accelerated gradient
methods. They contrast with the $O((L/m) \log \epsilon)$ rates attained by the
steepest-descent method on such functions.

We note that the eigenvalue-based argument that is ``sketched'' by
\cite{Pol87} does not extend rigorously beyond strongly convex
quadratic functions. A more sophisticated argument based on Lyapunov
functions is needed, like the one presented for Nesterov's accelerated
gradient method in \cite[Chapter~4]{Opt4ML}. 


The key to our argument for non-convergence to strict saddle points
lies in formulating the heavy-ball method as a mapping whose fixed
points are stationary points of $f$ and to which we can apply the
stable manifold theorem. Following \eqref{eq:hb2}, we define this
mapping to be
\begin{equation} \label{eq:hb.map}
G(z_1,z_2) = \left[ \begin{matrix} z_1 - \alpha \nabla f(z_1) + \beta(z_1-z_2) \\
z_1
 \end{matrix} \right], \quad (z_1,z_2) \in \R^n \times \R^n.
\end{equation}
Note that
\begin{equation} \label{eq:hb.map.D}
DG(z_1,z_2) = \left[ \begin{matrix} (1+\beta) I - \alpha \nabla^2
    f(z_1) & -\beta I \\ I & 0 \end{matrix} \right].
\end{equation}
We have the following elementary result about the relationship of
critical points for \eqref{eq:f} to fixed points for the mapping $G$.
\begin{lemma} \label{lem:fp}
If $x^*$ is a critical point of $f$, then $(z_1^*,z_2^*)=(x^*,x^*)$ is
a fixed point for $G$. Conversely, if $(z_1^*,z_2^*)$ is a fixed point
for $G$, then  $x^*=z_1^*=z_2^*$ is a critical point
for $f$.
\end{lemma}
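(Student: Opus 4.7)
The plan is to verify both directions by direct substitution into the defining formula \eqref{eq:hb.map} for $G$; no machinery beyond elementary algebra is required.

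For the forward direction, I would take a critical point $x^*$, so $\nabla f(x^*) = 0$, and substitute $(z_1, z_2) = (x^*, x^*)$ into $G$. The top block of $G(x^*, x^*)$ becomes $x^* - \alpha \nabla f(x^*) + \beta(x^* - x^*)$; the gradient term vanishes because $\nabla f(x^*) = 0$ and the momentum term vanishes because $x^* - x^* = 0$, leaving $x^*$. The bottom block is $x^*$ by definition of $G$. Hence $G(x^*, x^*) = (x^*, x^*)$.

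For the converse, I would start from the fixed-point equation $G(z_1^*, z_2^*) = (z_1^*, z_2^*)$ and read off its two components. The bottom component gives $z_1^* = z_2^*$ immediately. Substituting this into the top component eliminates the momentum term $\beta(z_1^* - z_2^*)$, so the equation reduces to $z_1^* - \alpha \nabla f(z_1^*) = z_1^*$, i.e.\ $\alpha \nabla f(z_1^*) = 0$. Since the stepsize $\alpha > 0$ in the heavy-ball method, this forces $\nabla f(z_1^*) = 0$, so $x^* := z_1^* = z_2^*$ is a critical point of $f$.

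There is essentially no obstacle here; the lemma is a routine bookkeeping step that records the correspondence between critical points of $f$ and fixed points of the iteration map, setting up the subsequent application of the stable manifold theorem. The only place one has to be a little careful is to note explicitly that $\alpha \neq 0$ is needed in the converse direction, which is harmless since $\alpha > 0$ is a standing assumption on the heavy-ball stepsize.
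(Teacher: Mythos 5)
Your proof is correct and matches the paper's argument exactly: both directions are direct substitution into the definition of $G$, with the bottom block forcing $z_1^*=z_2^*$ and the top block then forcing $\nabla f(z_1^*)=0$. Your explicit remark that $\alpha>0$ is needed in the converse is a small but worthwhile detail that the paper leaves implicit.
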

\begin{proof}
The first claim is obvious by substitution into \eqref{eq:hb.map}. For
the second claim, we have that if $(z_1^*,z_2^*)$ is a fixed point for
$G$, then
\[
\left[ \begin{matrix} z_1^* \\ z_2^* \end{matrix} \right] =
\left[ \begin{matrix} z_1^* - \alpha \nabla f(z_1^*) + \beta(z_1^*-z_2^*) \\
z_1^*
 \end{matrix} \right],
\]
from which we have $z_2^*=z_1^*$ and $\nabla f(z_1^*)=0$, giving the
result.
\end{proof}

We now establish that $G$ is a diffeomorphic mapping, a property
needed for application of the stable manifold result.
\begin{lemma} \label{lem:diffeo}
Suppose that Assumption~\ref{ass:1} holds. Then the mapping $G$
defined in \eqref{eq:hb.map} is a $C^r$ diffeomorphism. 
\end{lemma}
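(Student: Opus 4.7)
The plan is to directly exhibit an explicit inverse for $G$ and verify that both $G$ and this inverse are $C^r$. This is much more concrete than invoking the global inverse function theorem (which would require proving injectivity and properness separately), and it sidesteps any topological arguments.

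First I would verify that $G$ is $C^r$. Since Assumption~\ref{ass:1} posits that $f$ is $C^{r+1}$, the gradient $\nabla f$ is $C^r$. The map $G$ in \eqref{eq:hb.map} is built from $\nabla f$ together with affine operations on $(z_1,z_2)$, so $G \in C^r(\R^n\times\R^n,\R^n\times\R^n)$ is immediate.

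Next I would invert $G$ by hand. Given a target $(w_1,w_2)\in\R^n\times\R^n$, the equation $G(z_1,z_2)=(w_1,w_2)$ reads
\begin{equation*}
z_1 - \alpha\nabla f(z_1) + \beta(z_1-z_2) = w_1, \qquad z_1 = w_2.
\end{equation*}
Substituting $z_1=w_2$ into the first equation and solving for $z_2$ (which requires $\beta\neq 0$, a standing requirement for the heavy-ball scheme) yields the unique preimage
\begin{equation*}
G^{-1}(w_1,w_2) = \left(w_2,\; w_2 + \tfrac{1}{\beta}\bigl((1+\beta)w_2 - \alpha\nabla f(w_2) - w_1\bigr) - w_2\right),
\end{equation*}
or, more cleanly, $G^{-1}(w_1,w_2) = \bigl(w_2,\; \beta^{-1}[(1+\beta)w_2 - \alpha\nabla f(w_2) - w_1]\bigr)$. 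This formula shows $G$ is bijective, and the same regularity considerations as for $G$ show $G^{-1}\in C^r$, since the only nonlinear ingredient is again $\nabla f$.

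As a sanity check (and as a second path to the same conclusion) I would also note that the Jacobian $DG$ in \eqref{eq:hb.map.D} has determinant equal to $\beta^n$ up to sign, via the block formula applied to a matrix of the form $\bigl[\begin{smallmatrix}A & -\beta I\\ I & 0\end{smallmatrix}\bigr]$; this confirms $DG$ is everywhere invertible whenever $\beta\neq 0$, consistent with the explicit inverse above. I do not anticipate any serious obstacle: the proof is essentially a one-line back-substitution. The only subtlety worth flagging is the need to keep the momentum parameter $\beta$ strictly nonzero, which is a standing assumption of the method (the degenerate case $\beta=0$ degenerates heavy-ball into gradient descent and is covered by the analysis of \cite{LeeSJR16a}).
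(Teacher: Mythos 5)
Your proof is correct and follows essentially the same route as the paper's: both derive the explicit inverse formula $G^{-1}(w_1,w_2) = \bigl(w_2,\ \beta^{-1}[(1+\beta)w_2 - \alpha\nabla f(w_2) - w_1]\bigr)$ and observe that $G$ and $G^{-1}$ are $C^r$ because $\nabla f$ is. The only cosmetic difference is that the paper first proves injectivity separately before constructing the inverse, whereas you (reasonably) let the explicit two-sided inverse establish bijectivity in one stroke.
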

\begin{proof}
We need to show that $G$ is injective and surjective, and that $G$ and
its inverse are $r$ times continuously differentiable. 

To show injectivity of $G$, suppose that
$G(x_1,x_2)=G(y_1,y_2)$. Then, we have
\begin{equation} \label{eq:inject.1}
\left[ \begin{matrix} x_1 - \alpha \nabla f(x_1) + \beta(x_1-x_2) \\
x_1
 \end{matrix} \right] =
 \left[ \begin{matrix} y_1 - \alpha \nabla f(y_1) + \beta(y_1-y_2) \\
y_1
 \end{matrix} \right].
\end{equation}
Therefore, $x_1=y_1$, and so
\begin{equation} \label{eq:inject.2}
x_1-y_1+\beta(x_1-y_1+y_2-x_2)=\alpha(\nabla f(x_1)-\nabla f(y_1)) \;\;
\Rightarrow \;\; x_2=y_2,
\end{equation}
demonstrating injectivity.
To show that $G$ is surjective, we construct its inverse $G^{-1}$
explicitly. Let $(y_1,y_2)$ be such that
\begin{equation} \label{eq:diff.1}
\left[ \begin{matrix} y_1 \\ y_2 \end{matrix} \right] =
G(z_1,z_2) = 
\left[ \begin{matrix} z_1 - \alpha \nabla f(z_1) + \beta(z_1-z_2) \\
z_1
 \end{matrix} \right],
\end{equation}
Then $z_1=y_2$. From the  first partition in \eqref{eq:diff.1}, we 
obtain $z_2 = (z_1-y_1-\alpha \nabla f(z_1) )/\beta + z_1$, which after
substitution of $z_1=y_2$ leads to
\begin{equation} \label{eq:diff.2}
\left[ \begin{matrix} z_1 \\ z_2 \end{matrix} \right] =
G^{-1} (y_1,y_2) =
\left[ \begin{matrix} y_2 \\ \frac{1}{\beta} (y_2-y_1 - \alpha \nabla f(y_2)) + y_2 \end{matrix} \right].
\end{equation}
Thus, $G$ is a bijection. Both $G$ and $G^{-1}$ are continuously
differentiable one time less than $f$, so by Assumption~\ref{ass:1},
$G$ is a $C^r$-diffeomorphism.
\end{proof}

We are now ready to state the stable manifold theorem, which provides
tools to let us characterize the set of escaping points.
\begin{theorem}[Theorem~III.7 of \cite{shub1987global}] \label{th:smt} 
Let $0$ be a fixed point for the $C^r$ local diffeomorphism $\phi : U
\rightarrow E$ where $U$ is a neighborhood of $0$ in the Banach space
$E$. Suppose that $E = E_{cs} \oplus E_u$, where $E_{cs}$ is the
invariant subspace corresponding to the eigenvalues of $D\phi(0)$
whose magnitude is less than or equal to 1, and $E_u$ is the invariant
subspace corresponding to eigenvalues of $D\phi(0)$ whose magnitude
is greater than 1. Then there exists a $C^r$ embedded
disc $W_{loc}^{cs}$ that is tangent to $E_{cs}$ at 0 called the local
stable center manifold. Additionally, there exists a neighborhood $B$
of 0 such that $\phi(W_{loc}^{cs}) \cap B \subset W_{loc}^{cs}$, and
that if $z$ is a point such that $\phi^k (z) \in B$ for all $k \ge 0$,
then $z \in W_{loc}^{cs}$.
\end{theorem}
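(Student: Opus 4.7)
The plan is to apply the classical graph transform (Hadamard) method. First I would choose coordinates $(x,y) \in E_{cs} \times E_u$ adapted to the spectral splitting and write
\[
\phi(x,y) = (A_{cs} x + f(x,y), \, A_u y + g(x,y)),
\]
where $A_{cs} = D\phi(0)|_{E_{cs}}$ has spectral radius at most $1$, $A_u = D\phi(0)|_{E_u}$ is invertible with $\|A_u^{-1}\| < 1$, and $f,g$ together with their first derivatives vanish at the origin. After choosing adapted norms so that $\|A_{cs}\| \le 1+\epsilon$ and $\|A_u^{-1}\| \le 1-\epsilon$, and multiplying $f,g$ by a smooth bump supported in a ball of small radius $\delta$, I may assume $f,g$ are globally Lipschitz with arbitrarily small constant $\eta$, while $\phi$ is unchanged on a neighborhood of $0$.

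Next I would seek $W^{cs}_{loc}$ as the graph of a map $h: E_{cs} \to E_u$ with $h(0)=0$ and $Dh(0)=0$. Local invariance $\phi(\mathrm{graph}(h)) \subset \mathrm{graph}(h)$ translates, after solving for $h$ using invertibility of $A_u$, to the fixed point equation
\[
h(x) = A_u^{-1} \bigl[\, h(A_{cs} x + f(x,h(x))) - g(x,h(x)) \,\bigr] =: T(h)(x).
\]
Restricted to the Banach space of bounded Lipschitz maps $h$ with $h(0)=0$, $\|h\|_\infty \le \delta$, and $\mathrm{Lip}(h) \le 1$, the operator $T$ maps the space into itself and is a contraction in the sup norm provided $\|A_u^{-1}\|$ and $\eta$ are sufficiently small, so Banach's theorem yields a unique invariant Lipschitz graph.

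The main obstacle, as with every version of this theorem, is upgrading the Lipschitz $h$ to $C^r$. I would carry this out by a fiber contraction argument: introduce a candidate derivative $\Lambda : E_{cs} \to L(E_{cs}, E_u)$ by formally differentiating the functional equation for $T(h)$, form the skew product $(h,\Lambda) \mapsto (T(h), \widetilde{T}(h,\Lambda))$, and verify that the second component is a uniform contraction in $\Lambda$ above the already-contracting base map. The Hirsch--Pugh fiber contraction theorem then forces the fixed point to satisfy $\Lambda = Dh$, establishing $C^1$ regularity; inducting on the order of jets and repeating the construction extends the regularity to $C^r$, and the tangency $Dh(0)=0$ falls out by substituting $x=0$ into the differentiated equation.

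Finally, for the trapping conclusion, suppose $\phi^k(z) \in B$ for all $k \ge 0$ and write $z=(x,y)$. If $y \ne h(x)$, then the deviation of $\phi^k(z)$ from $\mathrm{graph}(h)$, measured in the $E_u$ direction, is multiplied at each step by a factor bounded below by $\|A_u^{-1}\|^{-1} - O(\eta) > 1$, using the expanding action of $A_u$ on the complementary direction and the smallness of the Lipschitz perturbation. This exponential growth contradicts boundedness of the orbit in $B$, so necessarily $y = h(x)$ and $z \in W^{cs}_{loc}$, which completes the argument.
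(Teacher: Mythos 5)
The paper does not prove this statement; it is quoted verbatim as Theorem~III.7 of Shub's \emph{Global Stability of Dynamical Systems} \cite{shub1987global} and used as an off-the-shelf tool, so there is no in-paper proof against which to compare your argument. That said, your sketch is a correct outline of the standard Hadamard graph-transform proof, which is essentially the route Shub himself takes: choose coordinates adapted to the $E_{cs}\oplus E_u$ splitting and an adapted norm making $\|A_u^{-1}\|<1$, globalize the nonlinear remainder with a bump function so it is $\eta$-Lipschitz, obtain the invariant Lipschitz graph from a contraction on the space of Lipschitz-$1$ maps $h\colon E_{cs}\to E_u$ with $h(0)=0$, upgrade to $C^r$ by the Hirsch--Pugh fiber-contraction theorem applied to the formally differentiated functional equation, and establish the trapping property by noting that any orbit staying in $B$ but starting off the graph would have its $E_u$-deviation expanded by a factor bounded below by $\|A_u^{-1}\|^{-1}-O(\eta)>1$ at each step. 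The only place where care is needed beyond what you wrote is verifying that the graph transform maps the Lipschitz-$1$ ball into itself, which requires $\|A_u^{-1}\|\bigl(\|A_{cs}\|+O(\eta)\bigr)\le 1$; since $\|A_{cs}\|$ can only be brought down to $1+\epsilon$ (not below $1$, this being the \emph{center}-stable case), one must choose $\epsilon$ and $\eta$ jointly small relative to the gap $1-\|A_u^{-1}\|$, but this is a routine bookkeeping point and your sketch is otherwise sound.
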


This is a similar statement of the stable manifold theorem to the one
found in \cite{LeeSJR16a}, except that since we have to deal with
complex eigenvalues here, we emphasize that the decomposition is
between the eigenvalues whose {\em magnitude} is less than or equal to
$1$, and greater than $1$, respectively. It guarantees the existence of
a stable center manifold of dimension equal to the number of
eigenvalues of the Jacobian at the critical point that are less than
or equal to 1.

We show now that the Jacobian $DG(x^*,x^*)$ has the properties
required for application of this result, for values of $\alpha$ and
$\beta$ similar to the choices \eqref{eq:polyak.hb}. (Note that the
conditions on $\alpha$ and $\beta$ in this result hold when $\alpha
\in (0, 4/L)$ and $\beta \in (-1+\alpha L/2,1)$, where $L$ is the
Lipschitz constant from Assumption~\ref{ass:1}.) For purposes of this
and future results in this section, we assume that at the point $x^*$
we have $\nabla f(x^*)=0$ and that the eigenvalue decomposition of
$\nabla^2 f(x^*)$ can be written as
\begin{equation} \label{eq:vdvt}
\nabla^2 f(x^*) = V \Lambda V^T = \sum_{i=1}^n \lambda_i v_i (v_i)^T,
\end{equation}
where the eigenvalues $\lambda_1,\lambda_2,\dotsc,\lambda_n$
have
\begin{equation} \label{eq:np}
\lambda_1 \ge \lambda_2 \ge \dotsc \ge \lambda_{n-p} \ge 0 >
\lambda_{n-p+1} \ge \dotsc \ge \lambda_n,
\end{equation}
for some $p$ with $1 \le p < n$, where $\Lambda = \diag
(\lambda_1,\lambda_2, \dotsc, \lambda_n)$, and where $v_i$,
$i=1,2,\dotsc,n$ are the orthonormal set of eigenvectors that
correspond to the eigenvalues in \eqref{eq:np}. The matrix $V = [
  v_1 \, | \, v_2 \, | \, \dotsc \, | \, v_n]$ is orthogonal.

\begin{theorem} \label{th:DG}
Suppose that Assumption~\ref{ass:1} holds.  Let $x^*$ be a critical
point for $f$ at which $\nabla^2 f(x^*)$ has $p$ negative eigenvalues,
where $p \ge 1$. Consider the mapping $G$ defined by \eqref{eq:hb.map}
where 
\[
0 < \alpha < \frac{4}{\lambda_1}, \quad
\beta \in \left( \max \left( -1+\frac{\alpha \lambda_1}{2},0 \right) , 1 \right),
\]
where $\lambda_1$ is the largest positive eigenvalue of $\nabla^2
f(x^*)$. Then there are matrices $\tilde{V}_s \in \R^{2n \times
  (2n-p)}$ and $\tilde{V}_u \in \R^{2n \times p}$ such that (a) the
$2n \times 2n$ matrix $\tilde{V} = \left[ \tilde{V}_s \, | \,
  \tilde{V}_u \right]$ is nonsingular; (b) the columns of
$\tilde{V}_s$ span an invariant subspace of $DG(x^*,x^*)$
corresponding to eigenvalues of $DG(x^*,x^*)$ whose magnitude is less
than or equal to $1$; (c) the columns of $\tilde{V}_u$ span an
invariant subspace of $DG(x^*,x^*)$ corresponding to eigenvalues of
$DG(x^*,x^*)$ whose magnitude is greater than $1$.
\end{theorem}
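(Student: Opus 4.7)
The plan is to reduce $DG(x^*, x^*)$ to a direct sum of $2 \times 2$ matrices by exploiting the eigendecomposition of the Hessian, and then to classify the spectrum of each block. First, conjugate $DG(x^*,x^*)$ by $\diag(V, V)$: using $V^T \nabla^2 f(x^*) V = \Lambda$, this produces the matrix
\[
\begin{pmatrix} (1+\beta)I - \alpha \Lambda & -\beta I \\ I & 0 \end{pmatrix},
\]
whose four $n \times n$ blocks are all diagonal. A permutation that interleaves the two copies of $\R^n$ then turns it into a block-diagonal matrix with $n$ blocks of the form
\[
B_i = \begin{pmatrix} 1 + \beta - \alpha \lambda_i & -\beta \\ 1 & 0 \end{pmatrix}, \qquad i = 1, \dotsc, n,
\]
so the spectrum of $DG(x^*,x^*)$ is the union of the spectra of the $B_i$ and any invariant-subspace splitting can be carried out blockwise.

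Next, analyze each $B_i$ through the characteristic polynomial $p_i(\mu) = \mu^2 - (1 + \beta - \alpha \lambda_i)\mu + \beta$, using the identities $p_i(1) = \alpha\lambda_i$ and $p_i(-1) = 2(1+\beta) - \alpha\lambda_i$ together with the fact that the product of the two roots equals $\beta \in (0,1)$. The Schur--Cohn criterion for a monic real quadratic places both roots strictly inside the unit circle exactly when $|\beta| < 1$ and $\alpha\lambda_i \in (0, 2(1+\beta))$. The hypothesis $\beta > -1 + \alpha\lambda_1/2$, combined with $\lambda_i \le \lambda_1$, guarantees $\alpha\lambda_i < 2(1+\beta)$ for every $i$. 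Three cases then emerge: if $\lambda_i > 0$, both roots of $B_i$ lie strictly inside the unit circle; if $\lambda_i = 0$, $p_i(\mu) = (\mu-1)(\mu-\beta)$ has roots $1$ and $\beta$, both in the closed unit disk; if $\lambda_i < 0$, $p_i(1) < 0$ forces one real root in $(1,\infty)$, and the other, being positive by the product identity (since $\beta > 0$), lies in $(0,\beta) \subset (0,1)$.

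Finally, assemble $\tilde V_s$ and $\tilde V_u$. Counting block-eigenvalues yields $2(n-p) + p = 2n - p$ of magnitude at most $1$ and exactly $p$ of magnitude greater than $1$, matching the claimed dimensions. Inside each $B_i$, choose real bases for the invariant subspaces corresponding to these two eigenvalue groups; complex-conjugate pairs, which can arise in the $\lambda_i > 0$ case when the discriminant is negative, are handled by taking real and imaginary parts of a complex eigenvector, which span a real $B_i$-invariant plane. Embed these bases back into $\R^{2n}$ via $(w_1, w_2) \mapsto w_1 (v_i, 0) + w_2 (0, v_i)$ and concatenate across $i$ to form $\tilde V_s$ and $\tilde V_u$; the combined columns are a basis of $\R^{2n}$ by construction, so $\tilde V = [\tilde V_s \,|\, \tilde V_u]$ is nonsingular.

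The main obstacle is the $2 \times 2$ case analysis: verifying that the stated range of $(\alpha, \beta)$ really does place every eigenvalue from a $\lambda_i \ge 0$ block into the closed unit disk, especially as $\alpha\lambda_1$ approaches its upper bound $4$, and producing real matrices $\tilde V_s, \tilde V_u$ when some $B_i$ carries a complex-conjugate pair rather than two real eigenvalues.
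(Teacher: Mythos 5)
Your proof takes the same overall decomposition as the paper: conjugate $DG(x^*,x^*)$ by $\diag(V,V)$, interleave with a permutation to obtain $n$ independent $2\times 2$ blocks, and classify the spectrum of each block. The main difference is in how the eigenvalue magnitudes are verified. The paper handles the $\lambda_i>0$ case by an explicit case split (real vs.\ complex discriminant) followed by two fairly long chains of equivalent inequalities to establish $|\mu|<1$, and handles the $\lambda_i<0$ case by evaluating $t(0)=\beta>0$ and $t(1)=\alpha\lambda_i<0$. You instead invoke the Schur--Cohn/Jury criterion for a monic real quadratic, reducing the $\lambda_i>0$ case to $|\beta|<1$ and $\alpha\lambda_i\in(0,2(1+\beta))$, which follows immediately from $\beta>-1+\alpha\lambda_1/2$ and $\lambda_i\le\lambda_1$; and you locate the two real roots in the $\lambda_i<0$ case via $p_i(1)<0$ together with the product-of-roots identity $\mu^{\text{hi}}\mu^{\text{lo}}=\beta$. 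Both routes are correct; yours compresses the longest part of the paper's verification and does not need to distinguish the real and complex sub-cases for $\lambda_i>0$. One small simplification you could make: for the $\lambda_i\ge 0$ blocks every eigenvalue already has magnitude at most one, so the entire two-dimensional block is itself the stable invariant subspace, and you may simply take the standard basis of that $\R^2$ rather than forming real and imaginary parts of complex eigenvectors; this is effectively what the paper does by leaving the blocks $M_1,\dotsc,M_{n-p}$ undiagonalized in the transformed matrix.
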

\begin{proof}
Since
\begin{equation} \label{eq:DG*}
DG(x^*,x^*) = \left[ \begin{matrix} (1+\beta) I - \alpha \nabla^2
    f(x^*) & -\beta I \\ I & 0 \end{matrix} \right],
\end{equation}
we have from \eqref{eq:vdvt} that
\[
\left[ \begin{matrix} V^T &  0 \\ 0 & V^T \end{matrix} \right] 
DG(x^*,x^*) 
\left[ \begin{matrix} V &  0 \\ 0 & V \end{matrix} \right]  =
\left[ \begin{matrix} (1+\beta)I - \alpha \Lambda & -\beta I \\
I & 0 \end{matrix} \right].
\]
By performing a symmetric permutation $P$ on this matrix, interleaving
rows/columns from the first block with rows/columns from the second
block, we obtain a block diagonal matrix with $2 \times 2$ blocks of
the following form on the diagonals, that is, 
\begin{equation} \label{eq:GMi}
P^T 
\left[ \begin{matrix} V^T &  0 \\ 0 & V^T \end{matrix} \right] 
DG(x^*,x^*) 
\left[ \begin{matrix} V &  0 \\ 0 & V \end{matrix} \right]
P =
\left[ \begin{matrix} M_1 &&& \\
&M_2 && \\
&& \ddots & \\
&&& M_n \end{matrix} \right],
\end{equation}
where
\begin{equation} \label{eq:evi}
M_i := \left[ \begin{matrix} (1+\beta) - \alpha \lambda_i & -\beta \\
1 & 0 \end{matrix} \right], \quad i=1,2,\dotsc,n.
\end{equation}
The eigenvalues of $M_i$ are obtained from the following quadratic in
$\mu$:
\begin{equation} \label{eq:ev5}
t(\mu) := ((1+\beta) - \alpha \lambda_i - \mu)(-\mu) + \beta = 0,
\end{equation}
that is,
\begin{equation} \label{eq:ev6}
t(\mu) = \mu^2 - (1+\beta-\alpha \lambda_i) \mu + \beta = 0,
\end{equation}
for which the roots are
\begin{equation} \label{eq:roots}
\mu_i^{\text{hi,lo}} = \frac12 \left[  (1+\beta-\alpha \lambda_i) \pm
\sqrt{ (1+\beta-\alpha \lambda_i)^2 - 4 \beta} \right].
\end{equation}

We examine first the matrices $M_i$ for which $\lambda_i<0$.  We
have
\[
(1+\beta - \alpha \lambda_i)^2 - 4 \beta = (1-\beta)^2 - 2\alpha 
\lambda_i (1+\beta) + \alpha^2 | \lambda_i|^2 >0,
\]
so both roots in \eqref{eq:roots} are real. Since $t(\cdot)$ is convex
quadratic, with $t(0) = \beta>0$ and $t(1) = \alpha \lambda_i <0$,
one root is in $(0,1)$ and the other is in $(1,\infty)$.  We can thus 
write
\begin{subequations} \label{eq:Mi.S}
\begin{align}
\label{eq:Mi.S.1}
M_i & = S_i  \Lambda_i S_i^{-1},
\quad \mbox{where} \\
\Lambda_i = \left[ \begin{matrix} \mu_i^{\text{hi}} & 0 \\ 0 & \mu_i^{\text{lo}} \end{matrix} \right],
\quad
S_i &=  \left[ \begin{matrix}  \mu_i^{\text{hi}}     & 1                                                        \\ 1                 & \frac{1}{\mu_i^{\text{lo}}} \end{matrix} \right], \quad
S_i^{-1} = \left( \frac{\mu_i^{\text{hi}}}{\mu_i^{\text{lo}}} - 1 \right)^{-1} 
\left[ \begin{matrix} \frac{1}{\mu_i^{\text{lo}}} & -1                                                       \\ -1               & \mu_i^{\text{hi}} \end{matrix} \right].
\end{align}
\end{subequations}
where $\mu_i^{\text{hi}}$ is the eigenvalue of $M_i$ in the range
$(1,\infty)$ and $\mu_i^{\text{lo}}$ is the eigenvalue of $M_i$ in the range
$(0,1)$. (This claim can be verified by direct calculation of the product
\eqref{eq:Mi.S.1}.)

Consider now the matrices $M_i$ for which $\lambda_i = 0$.  From
\eqref{eq:roots}, we have that the roots are $1$ and $\beta$, which
are distinct, since $\beta \in (0,1)$. The eigenvalue decompositions
of these matrices have the form
\begin{equation} \label{eq:Mi.S2}
M_i = S_i \Lambda_i S_i^{-1}, \quad \mbox{where $\Lambda_i = \diag (1,\beta)$,}
\end{equation}
and the $S_i$ are $2 \times 2$ nonsingular matrices.

When $\lambda_i>0$, we show that the eigenvalues of $M_i$ both have
magnitude less than $1$, under the given conditions on $\alpha$ and
$\beta$. Both roots in \eqref{eq:roots} are complex exactly when the
term under the square root is negative, and in this case the magnitude
of both roots is 
\[
\frac12 \sqrt{ (1+\beta-\alpha \lambda_i)^2 + \left(4 \beta -  (1+\beta-\alpha \lambda_i)^2 \right)} = \sqrt{\beta},
\]
which is less than $1$ by assumption. When both roots are real, we
have $(1+\beta-\alpha \lambda_i)^2 - 4 \beta \ge 0$, and we require
the following to be true to ensure that both are less than $1$ in
absolute value:
\begin{equation} \label{eq:roots.p.real}
-2 < (1+\beta-\alpha \lambda_i) \pm \sqrt{ (1+\beta-\alpha
  \lambda_i)^2 - 4 \beta} <2.
\end{equation}
We deal with the right-hand inequality in \eqref{eq:roots.p.real}
first. By rearranging, we show that this is implied by the 
following sequence of equivalent inequalities:
\begin{alignat*}{2}
&& (1+\beta-\alpha \lambda_i) + \sqrt{ (1+\beta-\alpha
  \lambda_i)^2 - 4 \beta} &<2 \\
& \Leftrightarrow & \sqrt{(1+\beta-\alpha  \lambda_i)^2 - 4 \beta} & < 1-\beta + \alpha \lambda_i \\
& \Leftrightarrow & (1+\beta-\alpha  \lambda_i)^2 - 4 \beta & <
(1-\beta + \alpha \lambda_i)^2 \\
& \Leftrightarrow &  \beta^2 + 2\beta(1-\alpha \lambda_i) + (1-\alpha \lambda_i)^2 - 4 \beta & < \beta^2 - 2 \beta (1+\alpha \lambda_i) + (1+\alpha \lambda_i)^2  \\
& \Leftrightarrow &  2\beta - 2 \beta \alpha \lambda_i - 4 \beta + (1-\alpha \lambda_i)^2 & < -2\beta - 2 \beta \alpha \lambda_i + (1+\alpha \lambda_i)^2 \\
& \Leftrightarrow &  (1-\alpha \lambda_i)^2 & < (1+\alpha \lambda_i)^2 \\
& \Leftrightarrow & -2\alpha \lambda_i & < 2\alpha \lambda_i,
\end{alignat*}
where the last is clearly true, because of $\alpha>0$ and
$\lambda_i>0$. Thus the right-hand inequality in
\eqref{eq:roots.p.real} is satisfied. 

For the left-hand inequality in \eqref{eq:roots.p.real}, we have
\begin{alignat*}{2}
&& -2 & < (1+\beta-\alpha \lambda_i) - \sqrt{ (1+\beta-\alpha
  \lambda_i)^2 - 4 \beta} \\
& \Leftrightarrow & -3-\beta+\alpha \lambda_i & <   - \sqrt{ (1+\beta-\alpha
  \lambda_i)^2 - 4 \beta} \\
& \Leftrightarrow & 3+\beta-\alpha \lambda_i & >  \sqrt{ (1+\beta-\alpha
  \lambda_i)^2 - 4 \beta} \\
& \Leftrightarrow & (3+\beta-\alpha \lambda_i)^2 & >   (1+\beta-\alpha
  \lambda_i)^2 - 4 \beta \\
& \Leftrightarrow &  \beta^2 + 2 \beta (3-\alpha \lambda_i) + (3-\alpha \lambda_i)^2  &> \beta^2 + 2 \beta (1-\alpha \lambda_i) + (1-\alpha \lambda_i)^2 - 4 \beta \\
& \Leftrightarrow & 6 \beta - 2 \beta \alpha \lambda_i + 9 - 6 \alpha \lambda_i + \alpha^2 (\lambda_i)^2 &>
-2\beta - 2 \beta \alpha \lambda_i + 1 - 2 \alpha \lambda_i + \alpha^2 (\lambda_i)^2  \\
& \Leftrightarrow & 8 \beta + 8 - 4 \alpha \lambda_i &>0 \\
& \Leftrightarrow &  \beta &> -1 + \alpha \lambda_i/2,
\end{alignat*}
and the last condition holds because of the assumption that $\beta
> -1 + \alpha \lambda_1/2$. This completes our proof of the claim
\eqref{eq:roots.p.real}. Thus our assumptions on $\alpha$ and $\beta$
suffice to ensure that both eigenvalues of $M_i$ defined in
\eqref{eq:evi} have magnitude less than $1$ when $\lambda_i>0$.

By defining
\[
S := \left[ \begin{matrix} I &&&&& \\
& \ddots &&&& \\
&& I &&& \\
&&& S_{n-p+1} && \\
&&&& \ddots & \\
&&&&& S_n
 \end{matrix} \right],
\]
where $S_i$, $i=n-p+1,\dotsc,n$ are the matrices defined in
\eqref{eq:Mi.S}, we have from \eqref{eq:GMi} that
\begin{align} 
\nonumber
S^{-1} P^T 
\left[ \begin{matrix} V^T &  0 \\ 0 & V^T \end{matrix} \right] 
& DG(x^*,x^*) 
\left[ \begin{matrix} V &  0 \\ 0 & V \end{matrix} \right]
P S \\
\label{eq:GMi2}
& =
\left[ \begin{matrix} M_1 &&&&& \\
& \ddots &&&& \\
&& M_{n-p} &&& \\
&&& \Lambda_{n-p+1} && \\
&&&& \ddots & \\
&&&&& \Lambda_{n} \end{matrix} \right].
\end{align}
We now define another $2n$-dimensional permutation matrix $\tilde{P}$
that sorts the entries of the diagonal matrices $\Lambda_i$, $i=n-p+1,
\dotsc,n$ into those whose magnitude is greater than one and those
whose magnitude is less than or equal to one, to obtain
\begin{align} 
\nonumber 
\tilde{P}^T S^{-1} P^T 
\left[ \begin{matrix} V^T &  0 \\ 0 & V^T \end{matrix} \right] 
& DG(x^*,x^*) 
\left[ \begin{matrix} V &  0 \\ 0 & V \end{matrix} \right]
P S \tilde{P} \\
\label{eq:GMi3}
& =
\left[ \begin{matrix} M_1 &&&& \\
& \ddots &&& \\
&& M_{n-p} && \\
&&& \tilde{\Lambda}^{\text{lo}} & \\
&&&& \tilde{\Lambda}^{\text{hi}} \end{matrix} \right],
\end{align}
where 
\[
\tilde{\Lambda}^{\text{lo}} = \diag (\mu_{n-p+1}^{\text{lo}}, \mu_{n-p+2}^{\text{lo}}, \dotsc, \mu_n^{\text{lo}}), 
\quad
\tilde{\Lambda}^{\text{hi}} = \diag (\mu_{n-p+1}^{\text{hi}}, \mu_{n-p+2}^{\text{hi}}, \dotsc, \mu_n^{\text{hi}}).
\]
We now define
\[
\tilde{V} = \left[ \begin{matrix} V &  0 \\ 0 & V \end{matrix} \right]
P S \tilde{P},
\]
which is a nonsingular matrix, by nonsingularity of $S$ and
orthogonality of $V$, $P$, and $\tilde{P}$. As in the statement of the
theorem, we define $\tilde{V}_s$ to be the first $2n-p$ columns of
$\tilde{V}$ and $\tilde{V}_u$ to be the last $p$ columns. These define
invariant spaces. For the stable space, we have
\[
DG(x^*,x^*) \tilde{V}_s = \tilde{V}_s \tilde{\Lambda}_s, \quad
\mbox{where} \;\; \tilde{\Lambda}_s := 
\left[ \begin{matrix} M_1 &&& \\
& \ddots && \\
&& M_{n-p} & \\
&&& \tilde{\Lambda}^{\text{lo}}  \end{matrix} \right],
\]
where all eigenvalues of $\tilde{\Lambda}_s$ have magnitude less than
or equal to $1$. For the unstable space, we have
\[
DG(x^*,x^*) \tilde{V}_u = \tilde{V}_u \tilde{\Lambda}^{\text{hi}},
\]
where $\tilde{\Lambda}^{\text{hi}}$ is a diagonal matrix with all diagonal
elements greater than $1$.
\end{proof}

We find a basis for the eigenspace that corresponds to the eigenvalues
of $\\DG(x^*,x^*)$ that are greater than $1$ (that is, the column
space of $\tilde{V}_u$) in the following result.
\begin{corollary} \label{cor:uspace}
Suppose that the assumptions of Theorem~\ref{th:DG} hold. Then the
eigenvector of $DG(x^*,x^*)$ that corresponds to the unstable
eigenvalue $\mu_i^{\text{hi}} >1$, $i=n-p+1,\dotsc,n$ defined in
\eqref{eq:roots} is
\begin{equation} \label{eq:vis}
\left[ \begin{matrix} v_i \\ (1/\mu_i^{\text{hi}}) v_i \end{matrix} \right],
\end{equation}
where $v_i$ is an eigenvector of $\nabla^2 f(x^*)$ that corresponds
to $\lambda_i<0$. The set of such vectors forms an orthogonal basis
for the subspace of $\R^{2n}$ corresponding to the eigenvalues of
$DG(x^*,x^*)$ whose magnitude is greater than $1$.
\end{corollary}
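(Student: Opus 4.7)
The plan is to verify the eigenvector claim by direct substitution and then use orthogonality of the eigenvectors $v_i$ of the Hessian to conclude that the vectors form a basis.

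First, I would compute
\[
DG(x^*,x^*)\begin{bmatrix} v_i \\ c\,v_i \end{bmatrix}
= \begin{bmatrix} \bigl((1+\beta) - \alpha\lambda_i\bigr) v_i - \beta c\, v_i \\ v_i \end{bmatrix}
\]
using $\nabla^2 f(x^*)v_i=\lambda_i v_i$. For this to equal $\mu \,(v_i,\,c v_i)$, one needs $\mu c = 1$ and $(1+\beta)-\alpha\lambda_i - \beta c = \mu$. Substituting $c=1/\mu$ into the second identity yields $\mu^2 - (1+\beta-\alpha\lambda_i)\mu + \beta = 0$, which is exactly equation \eqref{eq:ev6}. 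Hence for any root $\mu$ of that quadratic, $(v_i,\,(1/\mu)v_i)$ is an eigenvector of $DG(x^*,x^*)$ with eigenvalue $\mu$. Specializing to $i\in\{n-p+1,\dotsc,n\}$ and $\mu=\mu_i^{\text{hi}}$ produces the claimed vectors in \eqref{eq:vis}.

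Next I would invoke Theorem~\ref{th:DG}: the unstable invariant subspace (eigenvalues of magnitude exceeding $1$) has dimension exactly $p$, and the unstable eigenvalues are precisely $\mu_{n-p+1}^{\text{hi}},\dotsc,\mu_n^{\text{hi}}$, since for $\lambda_i\ge 0$ the argument in that proof showed both roots of \eqref{eq:ev6} have modulus at most $1$. Thus the $p$ eigenvectors constructed in the previous step account for all unstable directions, provided they are linearly independent.

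Linear independence follows from orthogonality: for $i\neq j$ with $i,j\in\{n-p+1,\dotsc,n\}$,
\[
\left\langle \begin{bmatrix} v_i \\ (1/\mu_i^{\text{hi}})v_i \end{bmatrix},
\begin{bmatrix} v_j \\ (1/\mu_j^{\text{hi}})v_j \end{bmatrix} \right\rangle
= \left(1+\frac{1}{\mu_i^{\text{hi}}\mu_j^{\text{hi}}}\right) v_i^T v_j = 0,
\]
since the eigenvectors $v_i$ of the symmetric matrix $\nabla^2 f(x^*)$ were chosen orthonormal in \eqref{eq:vdvt}. Being $p$ nonzero mutually orthogonal eigenvectors of $DG(x^*,x^*)$ with eigenvalues of magnitude greater than $1$, they constitute an orthogonal basis for the $p$-dimensional unstable invariant subspace identified in Theorem~\ref{th:DG}. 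There is no real obstacle here; the only thing to be careful about is tying the dimension count to the theorem to ensure that these $p$ eigenvectors are not just contained in, but equal to, a basis of the unstable subspace.
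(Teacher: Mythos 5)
Your proposal is correct and takes essentially the same approach as the paper: verify the eigenvector claim by direct substitution into $DG(x^*,x^*)$, reduce it to the characteristic quadratic \eqref{eq:ev6}, and deduce orthogonality of the vectors \eqref{eq:vis} from orthonormality of the $v_i$. You are slightly more explicit than the paper in two places — parameterizing the second block by a general scalar $c$ and solving for it, and invoking the dimension count from Theorem~\ref{th:DG} to confirm the $p$ orthogonal eigenvectors span the whole unstable subspace rather than a proper part of it — but these are refinements of the same argument, not a different route.
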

\begin{proof}
We have from \eqref{eq:DG*} that
\begin{align*}
DG(x^*,x^*) \left[ \begin{matrix} v_i \\ (1/\mu_i^{\text{hi}}) v_i \end{matrix} \right] & =
\left[ \begin{matrix} (1+\beta) I - \alpha \nabla^2
    f(x^*) & -\beta I \\ I & 0 \end{matrix} \right]
\left[ \begin{matrix} v_i \\ (1/\mu_i^{\text{hi}}) v_i \end{matrix} \right] \\
& =
\left[ \begin{matrix} ((1+\beta-\alpha \lambda_i) - \beta/\mu_i^{\text{hi}}) v_i \\
v_i \end{matrix} \right],
\end{align*}
so the result holds provided that
\[
(1+\beta-\alpha \lambda_i) - \beta/\mu_i^{\text{hi}} = \mu_i^{\text{hi}}.
\]
But this is true because of \eqref{eq:ev6}, so \eqref{eq:vis} is an
eigenvector of $DG(x^*,x^*)$ corresponding to the eigenvalue
$\mu_i^{\text{hi}}$. Since the vectors $\{ v_i \, | \, i=n-p+1, \dotsc,n
\}$ form an orthogonal set, so do the vectors \eqref{eq:vis} for $i=n-p+1,
\dotsc,n$, completing the proof.
\end{proof}

Our next result is similar to \cite[Theorem~4.1]{LeeSJR16a}. It is for
a modified version of the heavy-ball method in which the initial value
for $x^{-1}$ is perturbed from its usual choice of $x^0$.
\begin{theorem} \label{th:sm.pert}
Suppose that the assumptions of Theorem~\ref{th:DG} hold. Suppose that
the heavy-ball method is applied from an initial point of
$(x^0,x^{-1}) = (x^0,x^0+\epsilon y)$, where $x^0$ and $y$ are random
vectors with $i.i.d.$ elements, and $\epsilon>0$ is small. We then
have
\[
\Pr \left(\lim_k x^k = x^* \right)  = 0,
\]
where the probability is taken over the starting vectors $x^0$ and
$y$.
\end{theorem}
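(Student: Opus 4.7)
The plan is to follow the template of \cite{LeeSJR16a}, combining Theorem~\ref{th:DG} with the stable manifold theorem and a change-of-variables argument on the probability side.

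First I would identify the set of initial iterate-pairs $(z_1,z_2)\in\R^n\times\R^n$ from which the heavy-ball iterates converge to $x^*$. By Theorem~\ref{th:DG}, applied at the strict saddle $x^*$, the Jacobian $DG(x^*,x^*)$ has an unstable invariant subspace of dimension $p\ge 1$, so Theorem~\ref{th:smt} applied to the $C^r$ local diffeomorphism $G$ (which is a diffeomorphism globally by Lemma~\ref{lem:diffeo}) provides a local stable center manifold $W_{loc}^{cs}$ of dimension $2n-p<2n$ through the fixed point $(x^*,x^*)$, and a neighborhood $B$ of that fixed point with the trapping property: any point whose forward $G$-orbit remains in $B$ must lie in $W_{loc}^{cs}$. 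Since $\dim W_{loc}^{cs}<2n$, it is a Lebesgue null subset of $\R^{2n}$.

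Next I would globalize. If the heavy-ball iterates $x^k$ converge to $x^*$, then the pairs $(x^{k+1},x^k)=G^k(z_1,z_2)$ converge to $(x^*,x^*)$, so there exists $k_0$ such that $G^k(z_1,z_2)\in B$ for all $k\ge k_0$, whence $G^{k_0}(z_1,z_2)\in W_{loc}^{cs}$. Therefore the set of starting pairs converging to $x^*$ is contained in
\begin{equation*}
\cSlam := \bigcup_{k=0}^{\infty} G^{-k}(W_{loc}^{cs}).
\end{equation*}
Since $G^{-1}$ is $C^r$ (Lemma~\ref{lem:diffeo}), each $G^{-k}(W_{loc}^{cs})$ is a $C^r$ image of a Lebesgue null set and is therefore itself Lebesgue null; a countable union of null sets is null, so $\cSlam$ has Lebesgue measure zero in $\R^{2n}$.

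Finally I would transfer this to a probability statement. The starting pair has the form $(z_1,z_2)=(x^0,x^0+\epsilon y)$, which is the image of $(x^0,y)$ under the invertible linear map $T(u,v)=(u,u+\epsilon v)$ with $|\det T|=\epsilon^n\neq 0$. Because the entries of $x^0$ and $y$ are i.i.d.\ from a continuous distribution, the joint law of $(x^0,y)$ is absolutely continuous with respect to Lebesgue measure on $\R^{2n}$, and pushing forward through $T$ yields a law on $(z_1,z_2)$ that is again absolutely continuous with respect to Lebesgue measure. Hence $\Pr((z_1,z_2)\in\cSlam)=0$, which gives the conclusion. The main obstacle is the passage from the purely local information provided by Theorem~\ref{th:smt} to the global convergence set $\cSlam$; this is handled precisely by using the diffeomorphism property of $G$ to pull the local stable manifold back along the orbit, which is exactly where Lemma~\ref{lem:diffeo} is needed.
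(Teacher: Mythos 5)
Your proposal is correct and follows essentially the same route as the paper: apply Theorem~\ref{th:smt} at the fixed point $(x^*,x^*)$ using the eigenvalue split from Theorem~\ref{th:DG}, observe that $W_{loc}^{cs}$ has dimension $2n-p<2n$ and hence Lebesgue measure zero, pull it back along the orbit via the diffeomorphism $G^{-1}$ to cover the global stable set by a countable union of null sets, and conclude. The only addition you make is spelling out the change-of-variables $(x^0,y)\mapsto(x^0,x^0+\epsilon y)$ and the absolute-continuity of the resulting law, a step the paper leaves implicit but which matches what it intends.
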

\begin{proof}
Our proof tracks that of \cite[Theorem~4.1]{LeeSJR16a}. As there, we
define the stable set for $x^*$ to be
\begin{equation} \label{eq:Ws}
W^s (x^*) := \left\{ (x^0,x^{-1}) \, : \, \lim_{k \to \infty} G^k(x^0,x^{-1}) = (x^*,x^*)
\right\}.
\end{equation}
For the neighborhood $B$ of $(x^*,x^*) \in \R^{2n}$ promised by
Theorem~\ref{th:smt}, we have for all $z \in W^s (x^*)$ that there is
some $l \ge 0$ such that $G^t(z) \in B$ for all $t \ge l$, and
therefore by Theorem~\ref{th:smt} we must have $G^l(z) \in
W^{cs}_{loc} \cap B$. Thus $W^s(x^*)$ is the set of points $z$ such
that $G^l(z) \in W^{cs}_{loc}$ for some finite $l$. From
Theorem~\ref{th:smt}, $ W^{cs}_{loc}$ is tangent to the subspace
$E_{cs}$ at $(x^*,x^*)$, and the dimension of $E_{cs}$ is $2n-p$, by
Theorem~\ref{th:DG} (since $E_{cs}$ is the space spanned by the
columns of $\tilde{V}_s$). This subspace has measure zero in
$\R^{2n}$, since $p \ge 1$. Since diffeomorphisms map sets of measure
zero to sets of measure zero, and countable unions of measure zero
sets have measure zero, we conclude that $W^s(x^*)$ has measure
zero. Thus the initialization strategy we have outlined produces a
starting vector in $W^s(x^*)$ with probability zero.
\end{proof}

Theorem~\ref{th:sm.pert} does not guarantee that once the iterates
leave the neighborhood of $x^*$, they never return. It does not
exclude the possibility that the sequence $\{ (x^{k+1},x^k) \}$
returns infinitely often to a neighborhood of $(x^*,x^*)$.

We note that the tweak of taking $x^{-1}$ slightly different from
$x^0$ does not affect practical performance of the heavy-ball method,
and has in fact been proposed before \cite{zavriev1993heavy}.  It also
does not disturb the theory that exists for this method, which for the
case of quadratic $f$ discussed in \cite{Pol87} rests on an argument
based on the eigendecomposition of the (linear) operator $DG$, which
is not affected by the modified starting point. We note too that the
accelerated gradient methods to be considered in the next section
can also allow $x^{-1} \ne x^0$ without
significantly affecting the convergence theory. A
Lyapunov-function-based convergence analysis of this method (see, for
example \cite[Chapter~4]{Opt4ML}, based on arguments in \cite{Tse08b})
requires only trivial modification to accommodate $x^{-1} \ne x^0$.

For the variant of heavy-ball method in which $x^0=x^{-1}$, we could
consider a random choice of $x^0$ and ask whether there is zero
probability of $(x^0,x^0)$ belonging to the measure-zero set
$W^s(x^*)$ defined by \eqref{eq:Ws}. The problem is of course that
$(x^0,x^0)$ lies in the $n$-dimensional subspace $Y^n := \{ (z_1,z_1)
\, | \, z_1 \in \R^n \}$, and we would need to establish that the
intersection $W^s(x^*) \cap Y^n$ has measure zero in $Y^n$. In other
words, we need that the set $\{ z_1 \, | \, (z_1,z_1) \in W^s(x^*) \}$
has measure zero in $\R^n$. We have a partial result in this regard,
pertaining to the set $W_{loc}^{cs}$, which is the local counterpart
of $W^s(x^*)$. This result also makes use of the subspace $E_{cs}$,
defined as in Theorem~\ref{th:smt}, which is the invariant subspace
corresponding to eigenvalues of $DG(x^*,x^*)$ whose magnitudes are
less than or equal to one.
\begin{theorem} \label{th:x0x0}
Suppose that the assumptions of Theorem~\ref{th:DG} hold. Then any
vector of the form $(w,w)$ where $w \in \R^n$ lies in the stable
subspace $E_{cs}$ only if $w \in \mbox{span} \{ v_1,v_2, \dotsc,
v_{n-p} \}$ that is, the span of eigenvectors of $\nabla^2 f(x^*)$
that correspond to nonnegative eigenvalues of this matrix.
\end{theorem}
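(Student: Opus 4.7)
The plan is to exploit the block-diagonal decomposition of $DG(x^*,x^*)$ that was derived in the proof of Theorem~\ref{th:DG}. After the change of basis $z_1 = V\hat{z}_1$, $z_2 = V\hat{z}_2$ and the permutation $P$ that interleaves the two copies of $V$, the action of $DG(x^*,x^*)$ decouples into the $2 \times 2$ blocks $M_i$ in \eqref{eq:evi}, each acting on the pair $(\hat{z}_{1,i},\hat{z}_{2,i})$. Consequently $E_{cs}$ decomposes as an internal direct sum, over $i=1,\ldots,n$, of the stable subspace of $M_i$ lifted back to $\R^{2n}$ through $v_i$. I would make this identification the opening step of the proof.

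Next, I would treat the two regimes of $i$ separately. For $i = 1, \ldots, n-p$, the eigenvalue $\lambda_i \ge 0$ and Theorem~\ref{th:DG} already showed that both eigenvalues of $M_i$ have magnitude at most $1$, so the entire $2$-dimensional block sits in $E_{cs}$ and no constraint is placed on $(\hat{z}_{1,i},\hat{z}_{2,i})$. For $i = n-p+1, \ldots, n$, the eigenvalue $\lambda_i < 0$, and the proof of Theorem~\ref{th:DG} produced eigenvalues $\mu_i^{\text{hi}} > 1$ and $\mu_i^{\text{lo}} \in (0,1)$. From the explicit form of $S_i$ in \eqref{eq:Mi.S}, the eigenvector of $M_i$ associated with $\mu_i^{\text{lo}}$ is a scalar multiple of $(1,\,1/\mu_i^{\text{lo}})$, so membership in $E_{cs}$ forces $\hat{z}_{2,i} = \hat{z}_{1,i}/\mu_i^{\text{lo}}$.

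Finally, I would specialize to a vector of the form $(w,w)$, which in the decoupled basis reads $\hat{z}_{1,i} = \hat{z}_{2,i}$ for every $i$. Combining this with the constraint from the previous paragraph gives $\hat{z}_{1,i}\bigl(1 - 1/\mu_i^{\text{lo}}\bigr) = 0$ for each $i \ge n-p+1$. Since $\mu_i^{\text{lo}} \in (0,1)$ strictly, the factor $1 - 1/\mu_i^{\text{lo}}$ is nonzero, forcing $\hat{z}_{1,i} = 0$ for all such $i$. Thus $w = \sum_{i=1}^{n-p} \hat{z}_{1,i} v_i$, which is exactly the claim.

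There is no genuine obstacle here; the argument is essentially a coordinate-reading exercise once the $M_i$ block decomposition is in hand. The only point requiring a little care is to correctly identify which eigenvector of $M_i$ corresponds to $\mu_i^{\text{lo}}$ (so as to extract the right ratio $1/\mu_i^{\text{lo}}$ from $S_i$), and to note that because $\mu_i^{\text{lo}} \neq 1$, the constraint $\hat{z}_{1,i} = \hat{z}_{1,i}/\mu_i^{\text{lo}}$ is nontrivial and forces the coefficient to vanish. In particular, the result is stated only for $E_{cs}$ itself and not for the (curved) manifold $W^{cs}_{loc}$, so no perturbative or tangent-space argument is required.
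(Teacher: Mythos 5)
Your proof is correct, and it takes a genuinely different (and in fact more direct) route than the paper. The paper's proof unfolds the iteration explicitly: it writes $w = \sum_i \tau_i v_i$, derives the scalar recurrence $(\sigma_{k+1,i},\eta_{k+1,i}) = M_i (\sigma_{k,i},\eta_{k,i})$ with $(\sigma_{0,i},\eta_{0,i})=(\tau_i,\tau_i)$, uses the diagonalization $M_i = S_i \Lambda_i S_i^{-1}$ to compute $M_i^k(\tau_i,\tau_i)$ in closed form, and shows that if $\tau_i \ne 0$ for some $i \ge n-p+1$ then $\sigma_{k,i}/\tau_i \to \infty$ as $k \to \infty$. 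It then concludes, by contradiction, that $(w,w)$ cannot belong to the invariant subspace $E_{cs}$ on which $DG$ has spectral radius at most $1$. Your argument instead statically characterizes $E_{cs}$: after the change of basis by $V$ and the interleaving permutation $P$, $E_{cs}$ splits as the direct sum of the stable subspaces of each $2\times 2$ block $M_i$; for $\lambda_i < 0$ that stable subspace is the one-dimensional span of the column $(1, 1/\mu_i^{\text{lo}})^T$ of $S_i$; intersecting with the constraint $\hat{z}_{1,i} = \hat{z}_{2,i}$ imposed by the form $(w,w)$ forces $\hat{z}_{1,i}(1 - 1/\mu_i^{\text{lo}}) = 0$, hence $\hat{z}_{1,i} = 0$ since $\mu_i^{\text{lo}} \in (0,1)$. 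Your approach buys conciseness and avoids the limiting argument entirely (no appeal to the behavior of $\mu_i^{\text{hi,lo}}$ under powers, and no implicit appeal to the boundedness of iterates on $E_{cs}$); what the paper's approach offers in exchange is an explicit view of the dynamics, which makes visible the exponential rate at which a nonzero $\tau_i$ drives the iterate away — information that the paper reuses informally in the discussion following the theorem.
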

\begin{proof}
We write $w = \sum_{i=1}^n \tau_i v_i$ for some coefficients
$\tau_i$, $i=1,2,\dotsc,n$, and show that $\tau_i=0$ for
$i=n-p+1,\dotsc,n$.

We first show that
\begin{equation} \label{eq:hg1}
DG(x^*,x^*)^k \left[ \begin{matrix} w \\ w \end{matrix} \right] =
\left[ \begin{matrix} \sum_{i=1}^n  \sigma_{k,i}  v_i \\
\sum_{i=1}^n  \eta_{k,i} v_i
\end{matrix} \right],
\end{equation}
where $\sigma_{0,i}=\tau_i$ and $\eta_{0,i}=\tau_i$, $i=1,2,\dotsc,n$.  To derive
recurrences for $\sigma_{k,i}$ and $\eta_{k,i}$, we consider the
multiplication by $DG(x^*,x^*)$ that takes us from stages $k$ to
$k+1$. We have
\begin{align*}
\left[ \begin{matrix} \sum_{i=1}^n  \sigma_{k+1,i} v_i \\
\sum_{i=1}^n  \eta_{k+1,i} v_i
\end{matrix} \right] 
& =
\left[ \begin{matrix} (1+\beta) I - \alpha \nabla^2
    f(x^*) & -\beta I \\ I & 0 \end{matrix} \right]
 \left[ \begin{matrix} \sum_{i=1}^n  \sigma_{k,i} v_i \\
\sum_{i=1}^n  \eta_{k,i}  v_i
\end{matrix} \right]  \\
&= \left[ \begin{matrix} \sum_{i=1}^n  (1-\alpha \lambda_i) \sigma_{k,i} v_i + \beta \sum_{i=1}^n (\sigma_{k,i}-\eta_{k,i})  v_i  \\
\sum_{i=1}^n  \sigma_{k,i} v_i
\end{matrix} \right].
\end{align*}
By matching terms, we have
\[
\left[ \begin{matrix} \sigma_{k+1,i} \\ \eta_{k+1,i} \end{matrix} \right] =
\left[ \begin{matrix} (1+\beta - \alpha \lambda_i) & -\beta \\
1 & 0 \end{matrix} \right]
\left[ \begin{matrix} \sigma_{k,i} \\ \eta_{k,i} \end{matrix} \right] =
M_i \left[ \begin{matrix} \sigma_{k,i} \\ \eta_{k,i} \end{matrix} \right],
\]
where $M_i$ is defined in \eqref{eq:evi}.  Using the factorization
\eqref{eq:Mi.S}, we have
\[
\left[ \begin{matrix} \sigma_{k,i} \\ \eta_{k,i} \end{matrix} \right] =
M_i^k \left[ \begin{matrix} \sigma_{0,i} \\ \eta_{0,i} \end{matrix} \right] =
S_i \Lambda_i^k S_i^{-1} \left[ \begin{matrix} 1 \\ 1 \end{matrix} \right] \tau_i,
\]
By substitution from \eqref{eq:Mi.S}, we obtain
\[
\left[ \begin{matrix} \sigma_{k,i} \\ \eta_{k,i} \end{matrix} \right] =
\left[ \begin{matrix}  \mu_i^{\text{hi}}     & 1                                                        \\ 1                 & \frac{1}{\mu_i^{\text{lo}}} \end{matrix} \right]
\left[ \begin{matrix} (\mu_i^{\text{hi}})^k & 0 \\ 0 & (\mu_i^{\text{lo}})^k \end{matrix} \right]
\left[ \begin{matrix} 1-\mu_i^{\text{lo}} \\
\mu_i^{\text{lo}} (\mu_i^{\text{hi}}-1) \end{matrix} \right] \frac{\tau_i}{\mu_i^{\text{hi}}-\mu_i^{\text{lo}}}.
\]
Because $0 < \mu_i^{\text{lo}} <1<\mu_i^{\text{hi}}$, it follows from this formula that
\[
\tau_i \neq 0 \;\; \Rightarrow \;\; \frac{\sigma_{k,i}}{\tau_i}  \to_k \infty, \;\; \frac{\eta_{k,i}}{\tau_i}  \to_k \infty,
\] 
so if $w$ has any component in the span of $v_i$, $i=n-p+1,\dotsc,n$
(that is, if $\tau_i \neq 0$), repeated multiplications of
$\left[ \begin{matrix} w \\ w \end{matrix} \right]$ by $DG(x^*,x^*)$
will lead to divergence, so $\left[ \begin{matrix} w \\ w \end{matrix}
  \right]$ cannot be in the  subspace $E_{cs}$.
\end{proof}

A consequence of this theorem is that for a random choice of $x^0$,
there is probability zero that $(x^0-x^*,x^0-x^*) \in E_{cs}$, which is
tangential to $W^{cs}_{loc}$ at $x^*$. Thus for $x^0$ close to $x^*$,
there is probability zero that $(x^0,x^0)$ is in the measure-zero set
$W^{cs}_{loc}$. Successive iterations of \eqref{eq:hb} are locally
similar to repeated multiplications of $(x^0-x^*,x^0-x^*)$ by the
matrix $DG(x^*,x^*)$, that is, for $(x^{k+1}-x^*,x^k-x^*)$ small, we
have
\[
\left[ \begin{matrix} x^{k+1}-x^* \\ x^{k} - x^*  \end{matrix} \right]
\approx
DG(x^*,x^*) 
\left[ \begin{matrix} x^{k}-x^* \\ x^{k-1} - x^*  \end{matrix} \right] \approx
DG(x^*,x^*)^{k+1}
\left[ \begin{matrix} x^{0}-x^* \\ x^{0} - x^*  \end{matrix} \right].
\]
Under the probability-one event that $x^0-x^* \notin E_{cs}$, this
suggests divergence of the iteration \eqref{eq:hb} away from
$(x^*,x^*)$.

On the other hand, we can show that if the sequence passes
sufficiently close to a point $(x^*,x^*)$ such that $x^*$ satisfies
second-order sufficient conditions to be a solution of \eqref{eq:f},
it subsequently converges to $(x^*,x^*)$.  For this result we need the
following variant of the stable manifold theorem.
\begin{theorem}[Theorem~III.7 of \cite{shub1987global}] \label{th:smt.s} 
Let $0$ be a fixed point for the $C^r$ local diffeomorphism $\phi : U
\rightarrow E$ where $U$ is a neighborhood of $0$ in the Banach space
$E$. Suppose that $E_{s}$ is the invariant subspace corresponding to
the eigenvalues of $D\phi(0)$ whose magnitude is strictly less than
1. Then there exists a $C^r$ embedded disc $W_{loc}^{s}$ that is
tangent to $E_{s}$ at $0$, and a neighborhood $B$ of $0$ such that
$W_{loc}^s \subset B$, and for all $z \in W_{loc}^{s}$, we have
$\phi^k(z) \to 0$ at a linear rate.
\end{theorem}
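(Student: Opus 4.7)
The plan is to construct $W^s_{loc}$ as the graph of a $C^r$ map $h : E_s \cap B \to E_{cu}$, where $E_{cu}$ is the invariant subspace of $D\phi(0)$ complementary to $E_s$ (so every eigenvalue of $D\phi(0)|_{E_{cu}}$ has magnitude at least $1$). First I would choose adapted coordinates in which $D\phi(0) = \diag(A_s, A_{cu})$, and write $\phi(z) = D\phi(0)z + R(z)$ with $R(0)=0$ and $DR(0)=0$. Because $\phi$ is a local diffeomorphism, $A_{cu}$ is invertible; moreover $\rho(A_s) < 1$ while every eigenvalue of $A_{cu}^{-1}$ has magnitude at most $1$. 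I would fix $\rho \in (\rho(A_s),1)$, so that $\|A_s^k\| \le C\rho^k$ and $\|A_{cu}^{-k}\|$ grows no faster than a polynomial (indeed stays bounded when $A_{cu}$ has only center eigenvalues and decays in the presence of strict unstable ones).

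Next I would set up the Lyapunov--Perron equation. A forward orbit $\{x^k\}_{k\ge0}$ with $x^0 = z$ that decays at rate $\rho$ must satisfy, for every $k \ge 0$,
\[
x_s^k = A_s^k z_s + \sum_{j=0}^{k-1} A_s^{k-1-j} R_s(x^j), \qquad
x_{cu}^k = -\sum_{i=0}^{\infty} A_{cu}^{-(i+1)} R_{cu}(x^{k+i}),
\]
where the second formula is obtained by iterating the $E_{cu}$-recursion backwards using the boundary condition $x^k \to 0$. On the Banach space $X_\rho$ of sequences equipped with the weighted norm $\|\mathbf{x}\|_\rho = \sup_k \rho^{-k}\|x^k\|$, restricted to a small ball of radius $\delta$, the right-hand sides define an operator $T_{z_s}$ whose fixed point is the decaying orbit. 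Since $R$ has arbitrarily small Lipschitz constant near $0$, the spectral gap turns $T_{z_s}$ into a strict contraction whenever $\delta$ is small enough, and Banach's fixed-point theorem yields a unique orbit $\mathbf{x}(z_s)$ depending on the parameter $z_s$.

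I would then define $h(z_s) := \mathbf{x}(z_s)_{cu}^0$ and set $W^s_{loc} = \{(z_s, h(z_s)) : z_s \in E_s \cap B\}$ for $B$ a small enough ball around $0$. Local invariance $\phi(W^s_{loc}) \subset W^s_{loc}$ follows from uniqueness: shifting a decaying orbit by one step gives another decaying orbit with a different initial stable component, and by uniqueness this shifted orbit must agree with the construction applied to that new $z_s$. Tangency $T_0 W^s_{loc} = E_s$, i.e. $Dh(0)=0$, follows because $R$ vanishes to second order at the origin, so linearizing the fixed-point equation in $z_s$ at the origin kills the $E_{cu}$ component. The linear convergence rate $\|\phi^k(z)\| \le C\rho^k$ for $z \in W^s_{loc}$ is built into the construction through the weighted norm.

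The main obstacle is establishing $C^r$ regularity of $h$ when $r \ge 2$. Continuity and single differentiability of $z_s \mapsto \mathbf{x}(z_s)$ follow from standard smooth dependence of the fixed point of a parametrized contraction on its parameter, via the implicit function theorem applied to $\mathbf{x} - T_{z_s}(\mathbf{x}) = 0$. For higher derivatives, a direct approach stumbles because the contraction constant for the derivative process is only barely smaller than one; the standard remedy is the fiber contraction theorem of Hirsch--Pugh--Shub, which one applies to a tower of auxiliary contractions on jet bundles whose fibers encode the first $r$ derivatives of $\mathbf{x}(\cdot)$, verifying at each level that the spectral gap still delivers contraction in an appropriately weighted norm. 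This inductive smoothness bootstrap is the technical heart of the argument.
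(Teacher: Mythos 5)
The paper does not prove this result at all: Theorem~\ref{th:smt.s} is quoted verbatim (as Theorem~III.7) from Shub's \emph{Global Stability of Dynamical Systems} and is treated as a black box, so there is no ``paper's own proof'' to compare against. Your sketch is therefore best measured against the source it cites. Shub proves this theorem by the \emph{graph transform} (Hadamard) method, iterating a map on a space of Lipschitz sections and passing to smoothness via the fiber-contraction theorem; you instead take the \emph{Lyapunov--Perron} route, solving for decaying orbits as fixed points of a variation-of-parameters operator on a $\rho$-weighted sequence space. Both are classical and both are correct, so this is a genuinely different, but legitimate, approach. What Lyapunov--Perron buys you is that the linear decay rate in the conclusion ($\phi^k(z)\to 0$ at rate $\rho$) falls out immediately from the choice of norm, rather than being extracted afterwards as in the graph-transform proof; what Hadamard buys is a cleaner geometric picture and sometimes better control when one wants invariance of the manifold in a prescribed cone field.

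One technical slip worth flagging: you fix a single $\rho\in(\rho(A_s),1)$ and assert both $\|A_s^k\|\le C\rho^k$ and that the weighted norm should use the same $\rho$. With that choice the stable-block contraction estimate produces a factor $\sum_{j=0}^{k-1}\rho^{k-1-j}\rho^j = k\,\rho^{k-1}$, which after multiplying by $\rho^{-k}$ is not uniformly bounded. The standard fix is to introduce two rates $\rho(A_s)<\rho_s<\rho<1$, bound $\|A_s^k\|\le C\rho_s^k$, and take the sequence-space weight to be $\rho$; the ratio $\rho_s/\rho<1$ then makes the geometric sum converge. Similarly, when $E_{cu}$ contains center eigenvalues, $\|A_{cu}^{-i}\|$ can grow polynomially, and the contraction estimate for the $cu$-block requires $\sum_i (1+i)^m\rho^i<\infty$, which of course holds but should be said explicitly. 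With those adjustments your outline is the standard Lyapunov--Perron proof, and the appeal to fiber contraction for $C^r$ regularity is exactly the right mechanism.
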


When $x^*$ satisfies second-order conditions for \eqref{eq:f}, all
eigenvalues of $\nabla^2 f(x^*)$ are strictly positive. It follows
from the proof of Theorem~\ref{th:DG} that under the assumptions of
this theorem, all eigenvalues of $DG(x^*,x^*)$ have magnitude strictly
less than $1$. Thus, the invariant subspace $E_s$ in
Theorem~\ref{th:smt.s} is the full space (in our case, $\R^{2n}$), so
$W_{loc}^s$ is a neighborhood of $(x^*,x^*)$. It follows that there is
some $\epsilon>0$ such that if $\| (x^{K+1},x^K) - (x^*,x^*) \| <
\epsilon$ for some $K$, the sequence $(x^{k+1},x^k)$ for $k \ge K$
converges to $(x^*,x^*)$ at a linear rate.

\section{Speed of Divergence on a Toy Problem}
\label{sec:simplest}

In this section, we investigate the rate of divergence of an
accelerated method on a simple nonconvex objective function, the
quadratic with $n=2$ defined by
\begin{equation} \label{eq:q}
f(x) = \frac12 (x_1^2 - \delta x_2^2), \quad \makebox{where $0<\delta \ll 1$.}
\end{equation}
Obviously, this function is unbounded below with a saddle point at
$(0,0)^T$. Its gradient has Lipschitz constant $L=1$.  Despite being a
trivial problem, it captures the behavior of gradient algorithms near
strict saddle points for indefinite quadratics of arbitrary dimension,
as is apparent from the analysis below.


We have described the heavy-ball method in \eqref{eq:hb}.
The steepest-descent method, by contrast, takes steps of the form
\begin{equation} \label{eq:sd}
x^{k+1} = x^k - \alpha_k \nabla f(x^k),
\end{equation}
for some $\alpha_k>0$. When $\nabla f(x)$ has Lipschitz constant $L$,
the choice $\alpha_k \equiv 1/L$ leads to decrease in $f$ at each
iteration that is consistent with convergence of $\| \nabla f(x^k) \|$
to zero at a sublinear rate when $f$ is bounded below
\cite{Nes04}. (The classical theory for gradient descent says little
about the case in which $f$ is unbounded below, as in this example.)

The gradient descent and heavy-ball methods will converge to the
saddle point $0$ for \eqref{eq:q} only from starting points of the
form $x^0=(x^0_1,0)$ for any $x^0_1 \in \R$.  (In the case of
heavy-ball, this claim follows from Theorem~\ref{th:x0x0}, using the
fact that $(1,0)^T$ is the eigenvector of $\nabla^2 f$ that
corresponds to the positive eigenvalue $1$.)  From any other starting
point, both methods will diverge, with function values going to
$-\infty$.  When the starting point $x^0$ is very close to (but not
on) the $x_1$ axis, the typical behavior is that these algorithms pass
close to $0$ before diverging along the $x_2$ axis.  We are interested
in the question: {\em Does the heavy-ball method diverge
  away from $0$ significantly faster than the steepest-descent
  method?} The answer is ``yes,'' as we show in this section.
  
  We consider a starting point that is just off the horizontal axis,
that is,
\begin{equation} \label{eq:x0}
x^0 = \left[ \begin{matrix} 1        \\ \eps \end{matrix} \right], \quad \mbox{for some small $\epsilon>0$.}
\end{equation}
For the steepest-descent method with constant steplength, we have
\[
\left[ \begin{matrix} x^{k+1}_1      \\ x^{k+1}_2 \end{matrix} \right] =
\left[ \begin{matrix} x^{k}_1        \\ x^{k}_2 \end{matrix} \right] - 
\alpha \left[ \begin{matrix} x^{k}_1 \\ -\delta x^{k}_2 \end{matrix} \right],
\]
so that 
\begin{equation} \label{eq:sd.q}
\left[ \begin{matrix} x^{k}_1        \\ x^{k}_2 \end{matrix} \right] =
\left[ \begin{matrix} (1-\alpha)^k   \\ (1+\delta \alpha)^k \epsilon \end{matrix} \right].
\end{equation}
One measure of repulsion from the saddle point is the number of
iterations required to obtain $|x^k_2| \ge 1$. Here it suffices for
$k$ to be large enough that $(1+\delta \alpha)^k \epsilon \ge 1$, for
which (using the usual  bound $\log(1+\gamma) \le \gamma$) a
sufficient condition is that
\[
k \ge \frac{|\log \epsilon|}{\delta \alpha}.
\]
Making the standard choice of steplength $\alpha = 1/L = 1$, we obtain
\begin{equation} \label{eq:sd.c1}
k \ge \frac{|\log \epsilon|}{\delta}.
\end{equation}

Consider now the heavy-ball method. Following \eqref{eq:hb}, the
iteration has the form:
\begin{equation} \label{eq:hb.q}
\left[ \begin{matrix} x^{k+1}_1                                                                      \\ x^{k+1}_2 \end{matrix} \right] =
\left[ \begin{matrix} (1-\alpha) x^{k}_1                                                             \\ (1+\delta \alpha) x^{k}_2 \end{matrix} \right] + \beta 
\left[ \begin{matrix} x^{k}_1-x^{k-1}_1                                                              \\ x^{k}_2 -x^{k-1}_2 \end{matrix} \right].
\end{equation}
(For this quadratic problem, the operator $G$ defined by
\eqref{eq:hb.map} is linear, so that $DG$ is constant.)  We can
partition this recursion into $x_1$ and $x_2$ components, and write
\begin{equation} \label{eq:toy.M.1}
\left[ \begin{matrix} x^{k+1}_1 \\ x^k_1  \end{matrix} \right]
=
M_1 
\left[ \begin{matrix} x^{k}_1 \\ x^{k-1}_1  \end{matrix} \right], \quad
\left[ \begin{matrix} x^{k+1}_2 \\ x^k_2  \end{matrix} \right]
=
M_2
\left[ \begin{matrix} x^{k}_2 \\ x^{k-1}_2  \end{matrix} \right], 
\end{equation}
where
\begin{equation} \label{eq:toy.M.2}
M_1 = 
\left[ \begin{matrix} 1-\alpha+\beta & -\beta \\1 & 0  \end{matrix} \right], \quad
M_2 = 
\left[ \begin{matrix} 1+\delta\alpha+\beta & -\beta \\1 & 0  \end{matrix} \right].
\end{equation}
The eigenvalues of these two matrices are given by \eqref{eq:roots},
by setting $\lambda_1=1$ and $\lambda_2=-\delta$, respectively.
For $\alpha$ and $\beta$ satisfying the conditions of
Theorem~\ref{th:DG}, which translate here to
\begin{equation} \label{eq:toy.ab}
0 < \alpha < 4, \quad \beta \in (-1+\alpha/2,1),
\end{equation}
both eigenvalues of $M_1$ are less than $1$ in magnitude (as we show
in the proof of Theorem~\ref{th:DG}), so the $x_1$ components converge
to zero. Again referring to the proof of Theorem~\ref{th:DG}, the
eigenvalues of $M_2$ are both real, with one of them greater than $1$,
suggesting divergence in the $x_2$ component.

To understand rigorously the behavior of the $x_2$ sequence, we make
some specific choices of $\alpha$ and $\beta$. Consider
\begin{equation} \label{eq:toy.ab2}
\alpha \in (0,3], \quad \beta = 1-\alpha \delta - \gamma,
\end{equation}
for some parameter $\gamma \geq 0$. Note that for small $\delta$ and
$\gamma$, these choices are consistent with \eqref{eq:toy.ab}.  By
substituting into \eqref{eq:roots}, we see that the two eigenvalues of
$M_2$ are
\[
\mu_2^{\text{hi,lo}} = \frac12 \left[ (2-\gamma) \pm \sqrt{\gamma^2 + 4\alpha
    \delta} \right].
\]
For reasonable choices of $\gamma$, we have that $\mu_2^{\text{hi}} = 1 + c
\sqrt{\delta}$ for a modest positive value of $c$. For specificity
(and simplicity) let us consider $\alpha=3$ and $\gamma=0$, for which
we have
\begin{equation} \label{eq:toy.mu2}
\mu_2^{\text{hi}} = 1+\sqrt{3\delta}, \quad \mu_2^{\text{lo}} = 1-\sqrt{3 \delta}.
\end{equation}
The formula \eqref{eq:Mi.S} yields $M_2 = S_2 \Lambda_2 S_2^{-1}$,
where $\Lambda_2 = \diag( 1+\sqrt{3 \delta} , 1-\sqrt{3 \delta})$ and
\[
S_2 = 
\left[ \begin{matrix} 1+\sqrt{3 \delta} & 1 \\ 1 & \frac{1}{1-\sqrt{3\delta}} \end{matrix} \right], \quad
S_2^{-1} = \frac{1-\sqrt{3 \delta}}{2 \sqrt{3 \delta}} 
\left[ \begin{matrix} \frac{1}{1-\sqrt{3 \delta}} & -1 \\
-1 & 1 + \sqrt{3\delta} \end{matrix} \right].
\]
From \eqref{eq:toy.M.1}, and setting $x^0_2=x^{-1}_2 = \epsilon$, we have
\[
\left[ \begin{matrix} x^{k}_2 \\ x^{k-1}_2 \end{matrix} \right] = 
S_2 \Lambda_2^k S_2^{-1} 
\left[ \begin{matrix} \epsilon \\ \epsilon \end{matrix} \right].
\]
By substituting for $\Lambda_2$ and $S_2$, we obtain
\begin{align*}
\left[ \begin{matrix} x^{k}_2 \\ x^{k-1}_2 \end{matrix} \right] & = \epsilon
S_2 \Lambda_2^k S_2^{-1} 
\left[ \begin{matrix} 1 \\ 1 \end{matrix} \right] \\
&= \epsilon \frac{1-\sqrt{3 \delta}}{2 \sqrt{3 \delta}}  S_2 \Lambda_2^k
\left[ \begin{matrix} \frac{\sqrt{3 \delta}}{1-\sqrt{3 \delta}} \\
\sqrt{3 \delta} \end{matrix} \right] \\
&= \epsilon S_2 \Lambda_2^k 
\left[ \begin{matrix} 1/2 \\ (1-\sqrt{3 \delta})/2 \end{matrix} \right] \\
&= \epsilon S_2 
\left[ \begin{matrix} (\mu_2^{\text{hi}})^k /2 \\
(\mu_2^{\text{lo}})^k (1-\sqrt{3 \delta})/2 \end{matrix} \right] \\
& \ge \frac12 \epsilon  
\left[ \begin{matrix} (1+\sqrt{3 \delta}) (\mu_2^{\text{hi}})^k  \\
(\mu_2^{\text{hi}})^k  \end{matrix} \right],
\end{align*}
where we simply drop the term involving $\mu_2^{\text{lo}}$ in the final step
and use $1-\sqrt{3 \delta}>0$. It follows that
\[
x_2^k \ge \frac12 \epsilon (1+\sqrt{ 3 \delta}) (\mu_2^{\text{hi}})^k = \frac12
\epsilon (1+\sqrt{3 \delta})^{k+1}.
\]
It follows from this bound, by a standard argument, that a sufficient
condition for $x_2^k \ge 1$ is
\[
k+1 \ge \frac{\log(2/\epsilon)}{\sqrt{3\delta}}.
\]

Thus we have confirmed that divergence from the saddle point occurs in \\
$O(| \log \epsilon|/\sqrt{\ddd})$ iterations for heavy-ball, versus
$O(| \log \epsilon|/\ddd )$ iterations for gradient descent.

For larger values of $\ddd$, the divergence of steepest-descent and
heavy-ball methods are both rapid, For appropriate choices of $\alpha$
and $\beta$, the iterates generated by both algorithms leave the
vicinity of the saddle point quickly.

\begin{figure}
\centering\includegraphics[width=4.5in]{./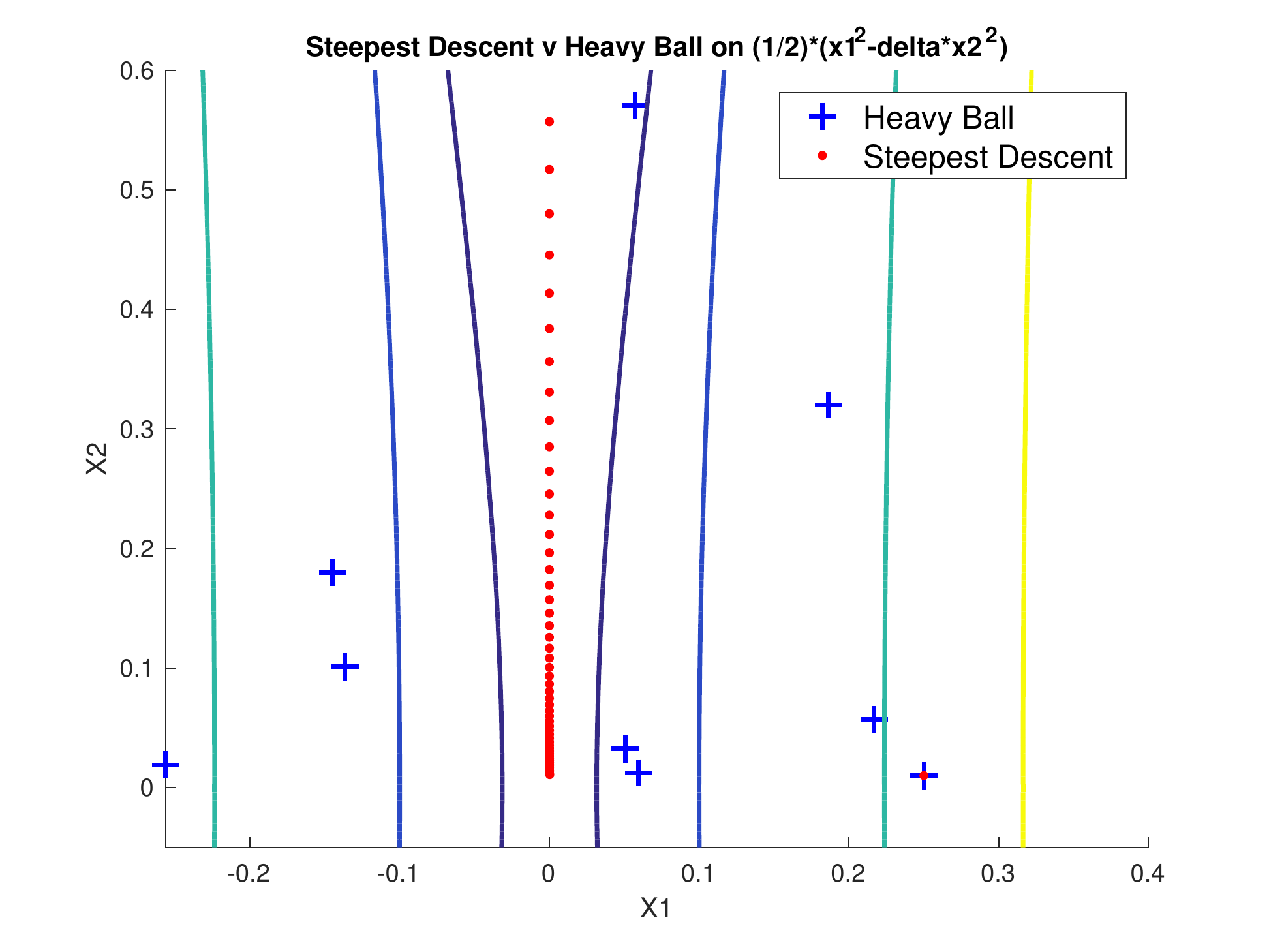}
\caption{Steepest descent and heavy-ball on \eqref{eq:q}
  with $\delta=.02$, from starting point $(.25,.01)^T$, with
  $\alpha=.75$, $\beta=1-\alpha \delta = .985$. Every
  $5$th iterate is plotted for each method.\label{fig:escape1} }
\end{figure}

Figure~\ref{fig:escape1} illustrates the divergence behavior of
steepest descent and heavy-ball on the function \eqref{eq:q} with
$\ddd=.02$. We set $\alpha=.75$ for both steepest descent and
heavy-ball. For heavy-ball, we chose $\beta=1-\alpha \delta = .985$.
Both methods were started from $x^0=(.25,.01)^T$. We see that the
trajectory traced by steepest descent approaches the saddle point
quite closely before diverging slowly along the $x_2$ axis. The
heavy-ball method ``overshoots'' the $x_2$ axis (because of the
momentum term) but quickly returns to diverging along the $x_2$
direction at a faster rate than for steepest descent.

\section{General Accelerated Gradient Methods Applied to Quadratic Functions}
\label{sec:agd}

Here we analyze the rate at which a general class of accelerated gradient methods
escape the saddle point of an $n$-dimensional quadratic function:
\begin{equation}
\underset{x \in \mathbb{R}^n} {\min} f(x) = \frac{1}{2} x^T H x \label{eq:quadf}
\end{equation}
where $H$ is a symmetric matrix with eigenvalues satisfying \eqref{eq:np}.  We assume
without loss of generality that $H$ is in fact diagonal, that is,
\begin{equation} \label{eq:Hdiag}
H = \diag (\lambda_1, \lambda_2, \dotsc, \lambda_n).
\end{equation}
The Lipschitz constant $L$ for $\nabla f$ is $L=\max(\lambda_1,-\lambda_n)$.

\begin{algorithm}[ht!]
\caption{General Accelerated Gradient Framework}
\label{alg:gradmethods}
\begin{algorithmic}
\STATE Choose $x^1 \in \R^n$, $\alpha < \frac{1}{L}$;
\STATE Set $x^0=x^1$;
\FOR{$k=1,2,\dotsc$}
\STATE Choose $\gamma_k \in [0,1]$ and $\beta_k \in [0,1]$;
\STATE $y^k = x^k + \gamma_k (x^k - x^{k-1})$;
\STATE $x^{k+1} = x^k + \beta_k (x^k - x^{k-1}) - \alpha \nabla f(y^k)$;
\ENDFOR
\end{algorithmic}
\end{algorithm}

As in Section~\ref{sec:simplest},
gradient descent with $\alpha \in (0,1/L)$ satisfies
\begin{equation}
x^{k+1}_i =  (1-\alpha \lambda_i) x^k_i  = (1-\alpha \lambda_i)^k x^1_i,
\quad i=1,2,\dotsc,n. \label{eq:gdupdate}
\end{equation}
It follows that for all $i \geq n-p+1$, for which $\lambda_i<0$,
gradient descent diverges in that component at a rate of $(1-\alpha
\lambda_i)$.

Algorithm~\ref{alg:gradmethods} describes a general accelerated gradient framework,
including gradient descent when $\gamma_k = \beta_k = 0$, heavy-ball when
$\gamma_k = 0$ and $\beta_k > 0$, and accelerated gradient methods
when $\gamma_k = \beta_k > 0$. With $f$ defined by \eqref{eq:quadf},
the update formula can be written as
\begin{flalign*}
x^{k+1} &= x^k + \beta_k(x^k - x^{k-1}) - \alpha H(x^k + \gamma_k(x^k - x^{k-1})) \\
&= ((1 + \beta_k) I - \alpha (1 + \gamma_k) H)x^k - (\beta_k I - \alpha \gamma_k H) x^{k-1},
\end{flalign*}
which because of \eqref{eq:Hdiag} is equivalent to
\begin{equation}
x^{k+1}_i =  ((1 + \beta_k) -(1 + \gamma_k)  \alpha \lambda_i)x_i^k -
(\beta_k - \gamma_k  \alpha \lambda_i) x_i^{k-1}, \quad i=1,2,\dotsc,n. \label{eq:xupdate}
\end{equation}


The following theorem describes the dynamics of $x^{k+1}_i$ in
\eqref{eq:xupdate} when $\lambda_i < 0$.
\begin{theorem}
\label{thm:recursiveb}
For all $i$ such that $\lambda_i < 0$, we have from \eqref{eq:xupdate}
that
\begin{equation}
x_i^{k+1} = x_i^0 \prod_{m=0}^k (1 + b_{i,m})  \label{eq:xprod}
\end{equation}
where
\begin{equation}
b_{i,k} =
\begin{cases}
0, & \mbox{for } k=0 \\
(\beta_k + \gamma_k \alpha |\lambda_i|)\left(1 - \frac{1}{1 + b_{i,k-1}}\right) + \alpha |\lambda_i|,
& \mbox{otherwise.}
\end{cases}
\label{eq:bdef}
\end{equation}
In addition if $\gamma_{k+1} \geq \gamma_k$ and $\beta_{k+1} \geq \beta_k$ for all $k$ then,
\begin{equation}
b_{i,k+1} \geq b_{i,k}, \quad k=1,2,\dotsc.
\label{eq:bleq1}
\end{equation}
\end{theorem}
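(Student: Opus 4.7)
The plan is to prove both claims by induction on $k$, working with the scalar recurrence in a single coordinate. Since $H$ is diagonal, \eqref{eq:xupdate} is already uncoupled across coordinates, and for the fixed index $i$ with $\lambda_i<0$ I substitute $\lambda_i=-|\lambda_i|$ so the update becomes
\[
x_i^{k+1} = \bigl((1+\beta_k) + (1+\gamma_k)\alpha|\lambda_i|\bigr) x_i^k - (\beta_k + \gamma_k\alpha|\lambda_i|)\, x_i^{k-1},
\]
with non-negative coefficients. This form makes clear that we expect $x_i^k$ to grow in magnitude and never cross zero.

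I would first establish \eqref{eq:xprod}--\eqref{eq:bdef} by induction. The base case $k=0$ follows from the initialization $x^0=x^1$ in Algorithm~\ref{alg:gradmethods} together with $b_{i,0}=0$. If $x_i^0=0$ then the coordinate update keeps $x_i^k=0$ for all $k$ and the identity is trivial, so I may assume $x_i^0\ne 0$, in which case $x_i^k\ne 0$ at every stage because every $1+b_{i,k}$ will turn out to be positive. Dividing the displayed recurrence by $x_i^k$ and using the inductive identity $x_i^{k-1}/x_i^k = 1/(1+b_{i,k-1})$ gives
\[
\frac{x_i^{k+1}}{x_i^k} = (1+\beta_k) + (1+\gamma_k)\alpha|\lambda_i| - \frac{\beta_k+\gamma_k\alpha|\lambda_i|}{1+b_{i,k-1}},
\]
and defining $b_{i,k}:= x_i^{k+1}/x_i^k - 1$ and simplifying produces exactly \eqref{eq:bdef}. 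The same formula shows $b_{i,k}\ge \alpha|\lambda_i|>0$ for every $k\ge 1$, so $1+b_{i,k}>0$ and the division at the next step is justified.

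For \eqref{eq:bleq1} I would run a second induction on $k\ge 1$. The base case $b_{i,2}\ge b_{i,1}$ is immediate because $b_{i,1}=\alpha|\lambda_i|$ (the bracket in \eqref{eq:bdef} vanishes at $k=1$ since $b_{i,0}=0$) and the extra term $(\beta_2+\gamma_2\alpha|\lambda_i|)(1-1/(1+b_{i,1}))$ added at $k=2$ is non-negative. For the inductive step, assume $b_{i,k}\ge b_{i,k-1}\ge 0$. The factor $\beta_k+\gamma_k\alpha|\lambda_i|$ is non-decreasing in $k$ by the monotonicity hypotheses on $\beta_k$ and $\gamma_k$, while the factor $1-1/(1+b_{i,k-1}) = b_{i,k-1}/(1+b_{i,k-1})$ is non-decreasing in $k$ because the map $x\mapsto x/(1+x)$ is increasing on $[0,\infty)$ and $b_{i,k-1}$ is non-decreasing by hypothesis. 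Both factors are also non-negative, so their product is non-decreasing, and adding the common term $\alpha|\lambda_i|$ yields $b_{i,k+1}\ge b_{i,k}$.

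The arguments are pure algebra plus two inductions, so I do not expect any genuine obstacle; the only bookkeeping point is to confirm throughout that $1+b_{i,k}>0$ so the ratios and the recursion are well defined, which follows immediately from the non-negativity of the summands in \eqref{eq:bdef}.
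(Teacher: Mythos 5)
Your proof is correct and follows essentially the same two-induction strategy as the paper: verify the base cases, divide the coordinate recurrence by $x_i^k$ to extract the recursion for $b_{i,k}$, and then show monotonicity by tracking each factor in \eqref{eq:bdef}. The only (welcome) extra touch is your explicit remark that $x/(1+x)$ is increasing, which makes the monotonicity step slightly cleaner, and your explicit disposal of the trivial case $x_i^0=0$; neither changes the substance of the argument.
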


\begin{proof}
We begin by showing that (\ref{eq:bdef}) holds for $k=0$ and $k=1$.
The case for $k=0$ is trivial as $x^1 = x^0$.
In addition, for $k=1$, the update formula \eqref{eq:xupdate} becomes
\[
x_i^{2} = (1-\alpha \lambda_i)x_i^0.
\]
Thus because $b_{i,0} = 0$, we can make this consistent with (\ref{eq:xprod}) by setting
$b_{i,1} = \alpha |\lambda_i|$ which is exactly (\ref{eq:bdef}) for $k = 1$.

Now assume that (\ref{eq:xprod}) holds for all $k \leq K-1$.  From (\ref{eq:xupdate}),
using the inductive hypothesis for $K-1$ and $K-2$, we need to show
\begin{multline}
x^0_i \prod_{m=0}^K (1+b_{i,m}) = x_i^0((1 + \beta_K) -
(1 + \gamma_K) \alpha \lambda_i) \prod_{m=0}^{K-1} (1 + b_{i,m}) \\
- x_i^0 (\beta_K - \gamma_K \alpha \lambda_i) \prod_{m=0}^{K-2} (1 + b_{i,m})
 \label{eq:thm1eq1}
\end{multline}
by the given definition of $b_{i,K}$ in (\ref{eq:bdef}).
Dividing both sides by $x^0_i \prod_{m=0}^{K-1} (1 + b_{i,m})$, this is equivalent to
\[
1 + b_{i,K} = 1 + \beta_K + (1 + \gamma_K) \alpha |\lambda_i| -
\frac{\beta_K + \gamma_K \alpha |\lambda_i|}{1 + b_{i,K-1}},
\]
which is true because
\[
b_{i,K} = (\beta_K + \gamma_K \alpha |\lambda_i|)\left(1 - \frac{1}{1 + b_{i,K-1}}\right)
+ \alpha |\lambda_i|
\]
is (\ref{eq:bdef}) with $k = K$, as required.

Now we assume that $\gamma_{K+1} \geq \gamma_K$ and $\beta_{K+1} \geq \beta_K$
holds for all $K \geq 1$ and show by induction that $b_{i,K+1} \geq b_{i,K}$ holds for all $K \ge 0$.
This is clearly true for $K=0$ since $\alpha |\lambda_i| > 0$.  Assume now
that $b_{i,k+1} \geq b_{i,k}$ holds for all $0 \leq k \leq K-1$.
We have
\begin{alignat*}{2}
b_{i,K+1} &= (\beta_{K+1} + \gamma_{K+1} \alpha |\lambda_i|)\left(1 - \frac{1}{1 + b_{i,K}}\right)
+ \alpha |\lambda_i| \\
&\geq (\beta_{K+1} + \gamma_{K+1} \alpha |\lambda_i|)\left(1 - \frac{1}{1 + b_{i,K-1}}\right)
+ \alpha |\lambda_i| \\
&\geq (\beta_{K} + \gamma_{K} \alpha |\lambda_i|)\left(1 - \frac{1}{1 + b_{i,K-1}}\right)
+ \alpha |\lambda_i| = b_{i,K}.
\end{alignat*}
where the second inequality above follows from $\gamma_{K+1} \geq \gamma_K$, 
$\beta_{K+1} \geq \beta_K$ and $b_{i,K-1} \ge b_{i,0} = 0$.
\end{proof}

Since $b_{i,k} \geq \alpha |\lambda_i|$ for all $k \geq 1$,
Theorem~\ref{thm:recursiveb} shows that Algorithm~\ref{alg:gradmethods}
diverges at a faster rate than gradient descent when at least one of
$\gamma_k > 0$ or $\beta_k > 0$ is true.  Now we explore the rate
of divergence by finding a limit for the sequence $\{b_{i,k}\}_{k=1,2,\dotsc}$.

\begin{theorem}
Let $\gamma_{k+1} \geq \gamma_k$ and $\beta_{k+1} \geq \beta_k$ hold for all $k$ and
denote $\bar{\gamma} = \lim_{k \to \infty} \gamma_k$ and
$\bar{\beta} = \lim_{k \to \infty} \beta_k$.
Then, for all $i$ such that $\lambda_i < 0$, we have \\
$\lim_{k \rightarrow \infty} b_{i,k} = \bar{b}_i$, where $\bar{b}_i$ is defined by by
\begin{equation} \label{eq:bbdef}
\bar{b}_i := \frac12 \left(\bar{\beta} - 1 + \alpha |\lambda_i|(1 + \bar{\gamma}) \right)
+ \frac12 \sqrt{(\bar{\beta} - 1 + \alpha |\lambda_i|(1 + \bar{\gamma}))^2 + 4\alpha |\lambda_i|}
\end{equation}
\label{thm:bbarbound}
\end{theorem}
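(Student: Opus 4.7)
The plan is to combine the monotonicity already proved in Theorem~\ref{thm:recursiveb} with an upper bound, invoke the monotone convergence theorem, and then identify the limit as the positive root of the fixed-point equation obtained by passing to the limit in the recurrence \eqref{eq:bdef}.

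First I would define the auxiliary map
\[
F(x) := \bigl(\bar{\beta} + \bar{\gamma}\alpha|\lambda_i|\bigr)\left(1 - \frac{1}{1+x}\right) + \alpha|\lambda_i|,
\]
which is strictly increasing in $x$ on $[0,\infty)$ and continuous. Because $\{\beta_k\}$ and $\{\gamma_k\}$ are nondecreasing with limits $\bar{\beta}$ and $\bar{\gamma}$, we have $\beta_k \le \bar\beta$ and $\gamma_k \le \bar\gamma$ for every $k$, so the recurrence \eqref{eq:bdef} yields $b_{i,k+1} \le F(b_{i,k})$. A direct algebraic check (multiply the equation $x = F(x)$ through by $(1+x)$) shows that the fixed points of $F$ are exactly the roots of the quadratic
\[
x^2 - \bigl(\bar{\beta} - 1 + \alpha|\lambda_i|(1+\bar{\gamma})\bigr) x - \alpha|\lambda_i| = 0,
\]
and the quantity $\bar{b}_i$ defined in \eqref{eq:bbdef} is the larger root of this quadratic. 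Since $b_{i,0}=0 \le \bar{b}_i$ and $F$ is increasing with $F(\bar{b}_i)=\bar{b}_i$, a straightforward induction gives $b_{i,k} \le \bar{b}_i$ for all $k$.

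Having established both monotonicity (from Theorem~\ref{thm:recursiveb}) and the uniform upper bound $\bar{b}_i$, the monotone convergence theorem guarantees that $b_{i,k}$ converges to some $b_i^\star \in [0,\bar{b}_i]$. Taking $k \to \infty$ in the recurrence \eqref{eq:bdef}, and using continuity of the right-hand side together with $\beta_k \to \bar\beta$ and $\gamma_k \to \bar\gamma$, I get that $b_i^\star$ is a fixed point of $F$, hence a root of the quadratic above. Because the product of the two roots equals $-\alpha|\lambda_i| < 0$, exactly one root is positive and the other is negative; since $b_i^\star \ge 0$, it must coincide with the positive root $\bar{b}_i$.

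The main obstacle is establishing the uniform upper bound cleanly. The subtlety is that the recurrence uses the time-varying coefficients $\beta_k, \gamma_k$, so one cannot directly compare iterates to those of the autonomous map $F$ without first exploiting the monotonicity of the coefficient sequences to dominate $\beta_k, \gamma_k$ by their limits. Once this domination step is in hand, the rest is a routine application of monotone convergence plus sign analysis of the quadratic; there is no further delicacy in passing to the limit because the map $F$ is continuous on $[0,\infty)$ and the iterates remain in a bounded interval.
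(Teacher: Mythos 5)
Your proof is correct, and the way you establish boundedness is genuinely different from the paper's. The paper first rewrites the recurrence in the polynomial form
$\alpha|\lambda_i| + (1+\alpha|\lambda_i|+\beta_k+\gamma_k\alpha|\lambda_i|)\,b_{i,k-1} = b_{i,k} + b_{i,k-1} + b_{i,k-1}b_{i,k}$,
then divides through by $b_{i,k-1}$ and argues \emph{by contradiction}: if $b_{i,k}\to\infty$, the left side tends to a finite value while the right side blows up, so the nondecreasing sequence must be bounded and hence converges. You instead obtain an explicit a priori bound: domination of the time-varying coefficients by their limits gives $b_{i,k+1}\le F(b_{i,k})$ for the autonomous map $F$, and a one-line induction using that $F$ is nondecreasing with fixed point $\bar b_i$ gives $b_{i,k}\le\bar b_i$ for all $k$. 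This is cleaner in that it produces the bound constructively (and in fact shows directly that $\bar b_i$ is a supersolution, which also makes the identification of the limit more transparent via the sign of the product of roots). One small imprecision: $F$ is \emph{nondecreasing}, not strictly increasing, when $\bar\beta=\bar\gamma=0$; the induction only needs monotonicity in the weak sense, so this does not affect validity, but it is worth stating accurately. The identification of the limit in both proofs is the same (pass to the limit in the recurrence, solve the quadratic, select the nonnegative root).
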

\begin{proof}
We can write (\ref{eq:xupdate}) as follows:
\[
x^{k+1}_i = (1 + \alpha |\lambda_i|) x^k_i +
(\beta_k + \gamma_k \alpha |\lambda_i|) (x^k_i - x^{k-1}_i). 
\]
Recall from Theorem \ref{thm:recursiveb} that $x_i^k = (1 + b_{i,k-1})
x_i^{k-1}$.  By substituting into the equation above, we have
\begin{flalign}
x^{k+1}_i &= \left[(1 + \alpha |\lambda_i|) (1 + b_{i,k-1}) + (\beta_k + \gamma_k \alpha |\lambda_i|) 
b_{i,k-1} \right] x^{k-1}_i \nonumber \\
&= \left[1 + \alpha |\lambda_i| + \left(1 + \alpha |\lambda_i| + \beta_k +
\gamma_k \alpha |\lambda_i| \right) b_{i,k-1} \right] x^{k-1}_i. \label{eq:bbarthmeq1}
\end{flalign}
Using Theorem~\ref{thm:recursiveb} again, we have
\[
x^{k+1}_i = \left[ (1 + b_{i,k}) (1 + b_{i,k-1})\right] x_i^{k-1} 
= \left[ 1 + b_{i,k} + b_{i,k-1} + b_{i,k-1}b_{i,k} \right] x_i^{k-1}. 
\]
By matching this expression with (\ref{eq:bbarthmeq1}), we obtain
\begin{equation}
\alpha |\lambda_i| + \left(1 + \alpha |\lambda_i| + \beta_k +
\gamma_k \alpha |\lambda_i| \right) b_{i,k-1}
=  b_{i,k} + b_{i,k-1} + b_{i,k-1}b_{i,k}, \label{eq:bbarthmeq2}
\end{equation}
which after division by $b_{i,k-1}$ yields
\begin{equation} \label{eq:bbardiv}
\frac{\alpha |\lambda_i|}{b_{i,k-1}} + \left(1 + \alpha |\lambda_i| + \beta_k +
\gamma_k \alpha |\lambda_i| \right) =  \frac{b_{i,k}}{b_{i,k-1}} + 1 + b_{i,k}.
\end{equation}

Now assume for contradiction that the nondecreasing sequence $\{
b_{i,k}\}_{k=1,2,\dotsc}$ has no finite limit, that is, $b_{i,k}
\rightarrow \infty$. Recalling that $\gamma_k$ and $\beta_k$ have a finite
limit (as they are nondecreaseing sequences restricted to the interval $[0,1]$),
we have by taking the limit as
$k \to \infty$ in (\ref{eq:bbardiv}) that the left-hand side approaches $\left(1 +
\alpha |\lambda_i| + \bar{\beta} + \bar{\gamma} \alpha |\lambda_i| \right)$,
while the right-hand side approaches $\infty$, a contradiction. Thus, the
nondecreasing sequence $\{b_{i,k}\}_{k=1,2,\dotsc}$ has a finite limit,
which we denote by $\bar{b}_i$.

To find the value for $\bar{b}_i$, we take limits as $k \rightarrow
\infty$ in (\ref{eq:bbarthmeq2}) to obtain
\[
\alpha |\lambda_i| + \left(1 + \alpha |\lambda_i| + \bar{\beta} + \bar{\gamma} \alpha |\lambda_i|
\right) \bar{b}_i =  2 \bar{b}_i + \bar{b}_i^2.
\]
By solving this quadratic for $\bar{b}_i$, we obtain
\[
\bar{b}_i = \frac12 \left(\bar{\beta} - 1 + \alpha |\lambda_i|(1 + \bar{\gamma}) \right)
\pm \frac12 \sqrt{(\bar{\beta} - 1 + \alpha |\lambda_i|(1 + \bar{\gamma}))^2 + 4\alpha |\lambda_i|}.
\]
By Theorem~\ref{thm:recursiveb}, we know that $b_{i,k} \geq 0$ for all $k$,
so that $\bar{b}_i \geq 0$. Therefore, $\bar{b}_i$ satisfies \eqref{eq:bbdef}, as claimed.
\end{proof}

We apply Theorem \ref{thm:bbarbound} to parameter choices that typically appear in
accelerated gradient methods.

\begin{corollary} \label{cor:agdlim}
Let the assumptions of Theorem~\ref{thm:bbarbound} hold, let $\gamma_k = \beta_k$ hold
for all $k$ and let $\bar{\gamma} = \bar{\beta} = 1$. Then,
\begin{equation} \label{eq:accellim}
\bar{b}_i = \alpha |\lambda_i| + \sqrt{\alpha |\lambda_i|} \sqrt{1 + \alpha |\lambda_i|}.
\end{equation}
\end{corollary}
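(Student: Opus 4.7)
The proof is a direct substitution into formula \eqref{eq:bbdef} from Theorem~\ref{thm:bbarbound}. The plan is to note first that the hypotheses of the corollary (namely $\gamma_k = \beta_k$ for all $k$, and the sequences $\{\gamma_k\}$ and $\{\beta_k\}$ converge to $\bar{\gamma} = \bar{\beta} = 1$) are compatible with the monotonicity assumptions of Theorem~\ref{thm:bbarbound}, so the formula \eqref{eq:bbdef} is applicable.

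Next, I would substitute $\bar{\beta} = 1$ and $\bar{\gamma} = 1$ into the linear term inside \eqref{eq:bbdef}, obtaining $\bar{\beta} - 1 + \alpha|\lambda_i|(1+\bar{\gamma}) = 2\alpha|\lambda_i|$. Feeding this into both the linear and the discriminant parts of \eqref{eq:bbdef} yields
\[
\bar{b}_i = \alpha|\lambda_i| + \tfrac12 \sqrt{4\alpha^2|\lambda_i|^2 + 4\alpha|\lambda_i|}
      = \alpha|\lambda_i| + \sqrt{\alpha^2|\lambda_i|^2 + \alpha|\lambda_i|}.
\]

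Finally, I would factor $\alpha|\lambda_i|$ out of the radicand, writing $\alpha^2|\lambda_i|^2 + \alpha|\lambda_i| = \alpha|\lambda_i|\,(1 + \alpha|\lambda_i|)$, and split the square root as $\sqrt{\alpha|\lambda_i|}\sqrt{1+\alpha|\lambda_i|}$, which reproduces \eqref{eq:accellim} exactly.

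There is essentially no obstacle here; the work of establishing that the limit exists and satisfies the quadratic \eqref{eq:bbdef} was already done in Theorem~\ref{thm:bbarbound}, and the corollary amounts to evaluating that closed form at the specific parameter values used by typical accelerated gradient schemes (where $\gamma_k = \beta_k \to 1$, as in Nesterov's method). The only thing worth double-checking is that the positive root in \eqref{eq:bbdef} is the correct one, but this is automatic from the sign convention already enforced in the proof of Theorem~\ref{thm:bbarbound} via $\bar{b}_i \ge 0$.
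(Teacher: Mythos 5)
Your proof is correct and takes essentially the same approach as the paper: direct substitution of $\bar{\beta}=\bar{\gamma}=1$ into \eqref{eq:bbdef} followed by elementary simplification of the radical. The paper's proof is the same computation, stated more tersely.
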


\begin{proof}
By direct computation with $\bar{\beta} = \bar{\gamma} = 1$, we have
\[
\bar{b}_i = \alpha |\lambda_i| + \frac12 \sqrt{4 (\alpha |\lambda_i|)^2 + 4 \alpha |\lambda_i|}
= \alpha |\lambda_i| + \sqrt{\alpha |\lambda_i|} \sqrt{1 + \alpha |\lambda_i|}.
\]
\end{proof}

The above corollary gives a rate of divergence for many standard choices of the
extrapolation parameters found in the accelerated gradient literature. In particular,
it includes the sequence $\beta_k = \gamma_k = \frac{t_{k-1}-1}{t_k}$ where $t_0 = 1$
and
\begin{equation} \label{eq:nesseq}
t_{k} = \frac{\sqrt{4 t_{k-1}^2 + 1} + 1}{2}
\end{equation}
which was used in a seminal work by Nesterov~\cite{Nes83}.
(For completeness, we provide a proof that  $t_k \to \infty$, so that
the assumptions of Corollary \ref{cor:agdlim} hold
for this sequence in the appendix.) Another setting used in recent works
$\beta_k = \gamma_k = \frac{k-1}{k+\eta+1}$ \cite{attouch2018convergence} 
\cite{attouch2016rate} \cite{chambolle2015convergence}. For proper choices of $\eta > 0$,
this scheme has a number of impressive properties such as fast convergence of iterates
for accelerated proximal gradient as well as achieving a $o(\frac{1}{k^2})$
of convergence in the weakly convex case.

We can also use Theorem~\ref{thm:bbarbound} to derive a bound for the
heavy-ball method. If we target the $n$-th eigenvalue and set
$\gamma_k = 0$ and $\beta_k = 1 - \alpha |\lambda_n|$ for all $k$,
simple manipulation shows that $\bar{b}_n = \sqrt{\alpha
  |\lambda_n|}$, which gives us an equivalent rate to that derived in
(\ref{eq:toy.mu2}). Note that for $\bar{b}_n$ defined in
\eqref{eq:accellim} we also have $\bar{b}_n \ge \sqrt{\alpha |
  \lambda_n|}$.

The divergence rates for accelerated gradient and heavy-ball methods
are significantly faster than the per-iteration rate of $(1+\alpha
|\lambda_n|)$ obtained for steepest descent.


\section{Experiments} \label{sec:experiments}

Some computational experiments verify that accelerated gradient
methods escape saddle points on nonconvex quadratics faster than
steepest descent.

We apply these methods to a quadratic with diagonal Hessian, with
$n=100$ and a single negative eigenvalue, $\lambda_n = -\delta = -0.01$.
The nonnegative eigenvalues are i.i.d. from the uniform distribution on
$[0,1]$, and starting vector $x^0$ is drawn from a uniform
distribution on the unit ball. Figure~\ref{fig:negspaceplot} plots the
norm of the component of $x^k$ in the direction of the negative
eigenvector $e_n = (0,0, \dotsc,0,1)^T$ at each iteration $k$, for
accelerated gradient, heavy-ball, and steepest descent. It also shows the
divergence that would be attained if the theoretical limit $\bar{b}_i$
from Theorem \ref{thm:bbarbound} applied at every iteration. Steepest
descent and heavy-ball were run with $\alpha = 1/L$. Heavy-ball uses
\eqref{eq:toy.ab2} to calculate $\beta$, yielding
$\beta = 0.989$ in the case of $\delta=.01$.
Accelerated gradient is run with $\alpha = 0.99/L$ and
$\beta_k = \gamma_k = \frac{t_k-1}{t_{k+1}}$ where $t_k$ is defined in (\ref{eq:nesseq}).

It is clear from Figure~\ref{fig:negspaceplot} that accelerated gradient
and heavy-ball diverge at a significantly faster rate than steepest
descent.  In addition, there is only a small discrepancy between
applying accelerated gradient and its limiting rate that is derived in
Corollary \ref{cor:agdlim}, suggesting that $b_{i,k}$ approaches
$\bar{b}_i$ rapidly as $k \to \infty$.

Next we investigate how these methods behave for various dimensions
$n$ and various distributions of the eigenvalues. For two values of
$n$ ($n=100$ and $n=1000$), we generate $100$ random matrices with
$n-5$ eigenvalues uniformly distributed in the interval $[0,1]$, with
the $5$ negative eigenvalues uniformly distributed in $[-2\delta,
  -\delta]$. The starting vector $x^0$ is uniformly distributed on the
unit ball. Algorithmic constants were the same as those used to
generate Figure \ref{fig:negspaceplot}. Each trial was run until the 
norm of the projection of the current iterate into the negative eigenspace
of the Hessian was greater than the dimension $n$. The results of these trials
are shown in Table~\ref{tab:n100}.

As expected, accelerated gradient outperforms gradient descent in all
respects. All convergence results are slightly faster for $n=100$ than
for $n=1000$, because the random choice of $x^0$ will, in expectation,
have a smaller component in the span of the negative eigenvectors in
the latter case. The eigenvalue spectrum has a much stronger effect on
the divergence rate. For steepest descent, an order of magnitude
decrease in the absolute value of the negative eigenvalues corresponds
to an order of magnitude increase in iterations, whereas Nesterov's
accelerated gradient sees significantly less growth in the iteration
count. While the accelerated gradient method diverges at a slightly
slower rate than the theoretical limit, the relative difference between the two does
not change much as the dimensions change. Thus,
Theorem~\ref{thm:bbarbound} provides a strong indication of the
practical behavior of Nesterov's method on these problems.

\begin{figure}[h]
\caption{Momentum methods and theoretical divergence applied to a
  quadratic function with $n=100$ and a single negative eigenvalue.
  The vertical axis displays the norm of the projection of $x^k$ onto
  the negative eigenvector.}
\label{fig:negspaceplot}
\centering
\includegraphics[width=\textwidth]{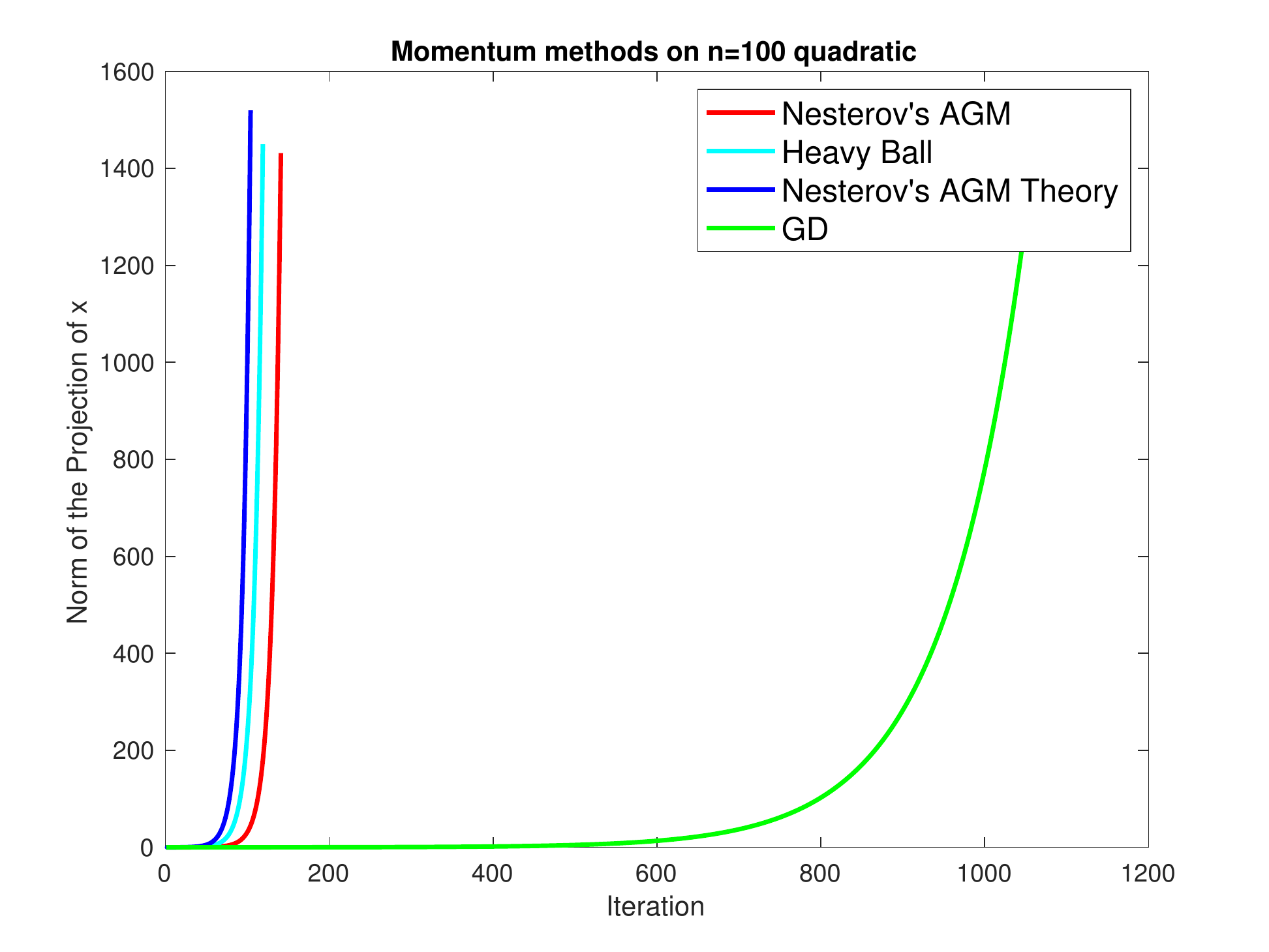}
\end{figure}

\begin{table}
\small
\caption{Divergence Behavior of Gradient Algorithms}
\begin{center}
\begin{tabular}{ |r|r|c|r|r|c| } 
 \hline
$n$ & $\delta$ & Method & Av. Iters & Max. Iters \\
 \hline
  && Steepest Descent & 379 & 518 \\
  $100$ & $10^{-2}$ & Accelerated Gradient & 71 & 87 \\
  && $\bar{b}$ Divergence Rate & 46 & 59 \\
 \hline
  && Steepest Descent & 3855 & 5603 \\
  $100$ & $10^{-3}$ & Accelerated Gradient & 242 & 299 \\ 
  && $\bar{b}$ Divergence Rate  & 155 & 194  \\
 \hline
  && Steepest Descent & 582 & 773 \\
  $1000$ & $10^{-2}$ & Accelerated Gradient & 99 & 116 \\
  && $\bar{b}$ Divergence Rate & 71 & 85 \\
 \hline
  && Steepest Descent & 5775 & 8240 \\
  $1000$ & $10^{-3}$ & Accelerated Gradient & 332 & 399 \\ 
  && $\bar{b}$ Divergence Rate & 235 & 282  \\
 \hline
\end{tabular}
\end{center}
\label{tab:n100}
\end{table}


\section{Conclusion}

We have derived several results about the behavior of accelerated
gradient methods on nonconvex problems, in the vicinity of critical
points at which at least one of the eigenvalues of the Hessian
$\nabla^2 f(x^*)$ is negative.  Section~\ref{sec:sm} shows that the
heavy-ball method does not converge to such a point when started
randomly, while Sections~\ref{sec:simplest} and \ref{sec:agd} show
that when $f$ is an indefinite quadratic, momentum methods diverge
faster than the steepest-descent method.

It would be interesting to extend the results on speed of divergence
to non-quadratic smooth functions $f$. It would also be interesting to
know what can be proved about the complexity of convergence to a point
satisfying second-order necessary conditions, for unadorned
accelerated gradient methods. A recent work \cite{du2017gradient}
shows that gradient descent can take exponential time to escape from a
set of saddle points.  We believe that a similar result holds for
accelerated methods as well. The report \cite{jin2017accelerated},
which appeared after this paper was submitted, describes an
accelerated gradient method that add noise selectively to some
iterates, and exploits negative curvature search directions when they
are detected in the course of the algorithm. This approach is shown to
have the $O(\epsilon^{-7/4})$ rate that characterizes the best known
gradient-based algorithms for finding second-order necessary points of
smooth nonconvex functions.

\section*{Acknowledgments} 
We are grateful to Bin Hu for his advice and suggestions on the
manuscript. We are also grateful to the referees and editor for
helpful suggestions.

\bibliographystyle{plain} \bibliography{refs}

\appendix

\section{Properties of the  Sequence $\{t_k\}$ Defined By (\ref{eq:nesseq})}

In this appendix we show that the following two properties hold for
the sequence defined by \eqref{eq:nesseq}:
\begin{equation} \label{eq:gt2}
\frac{t_{k-1}-1}{t_k} \;\; \mbox{is an increasing nonnegative sequence}
\end{equation}
and
\begin{equation} \label{eq:gt3}
\lim_{k \rightarrow \infty} \frac{t_{k-1}-1}{t_k} = 1.
\end{equation}
We begin by noting two well known properties of the sequence $t_k$
(see for example  \cite[Section~3.7.2]{bubeck2015convex}):
\begin{equation} \label{eq:tsquared}
t_k^2 - t_k = t_{k-1}^2
\end{equation}
and
\begin{equation} \label{eq:tlb}
t_k \geq \frac{k+1}{2}.
\end{equation}
To prove that $\frac{t_{k-1}-1}{t_k}$ is monotonically increasing, we
need
\[
\frac{t_{k-1}-1}{t_k} = \frac{t_{k-1}}{t_k} - \frac{1}{t_k}
\leq \frac{t_k}{t_{k+1}} - \frac{1}{t_{k+1}} = \frac{t_k - 1}{t_{k+1}}, \quad
k=1,2,\dotsc.
\]
Since $t_{k+1} \geq t_k$ (which follows immediately from
\eqref{eq:nesseq}), it is sufficient to prove that
\[
\frac{t_{k-1}}{t_k} \leq \frac{t_k}{t_{k+1}}.
\]
By manipulating this expression and using (\ref{eq:tsquared}), we
obtain the equivalent expression
\begin{equation} \label{eq:gs1}
t_{k-1} \leq \frac{t_k^2}{t_{k+1}} = \frac{t_{k+1}^2 - t_{k+1}}{t_{k+1}} = t_{k+1} - 1.
\end{equation}
By definition of $t_{k+1}$, we have
\[
t_{k+1} = \frac{\sqrt{4t_k^2 + 1} + 1}{2} \geq t_k + \frac{1}{2}
= \frac{\sqrt{4t_{k-1}^2 + 1} + 1}{2} + \frac{1}{2} \geq t_{k-1} + 1.
\]
Thus \eqref{eq:gs1} holds, so the claim \eqref{eq:gt2} is proved. The
sequence $\{ (t_{k-1}-1)/t_k \}$ is nonnegative, since $(t_0-1)/t_1
= 0$.

Now we prove \eqref{eq:gt3}. We can lower-bound $(t_{k-1}-1)/t_k$ as follows:
\begin{align} 
\nonumber
& \frac{t_{k-1}-1}{t_k} = \frac{2(t_{k-1} -1)}{\sqrt{4 t_{k-1}^2 +1} + 1} 
\label{eq:uy1}
\geq \frac{2(t_{k-1} -1)}{\sqrt{4 t_{k-1}^2} + 2} \\
& \quad\quad\quad\quad = \frac{2(t_{k-1} -1)}{2 (t_{k-1} + 1)}
= 1 - \frac{2}{t_{k-1}+1}.
\end{align}
For an upper bound, we have from $t_k \geq t_{k-1}$ that
\begin{equation} \label{eq:uy2}
\frac{t_{k-1} -1}{t_k}  \le \frac{t_{k-1}}{t_k}  \le 1.
\end{equation}
Since $t_{k-1} \to \infty$ (because of \eqref{eq:tlb}), it follows
from \eqref{eq:uy1} and \eqref{eq:uy2} that \eqref{eq:gt3} holds.

\end{document}